\providecommand\@dotsep{5}
\def\listtodoname{List of Todos}
\def\listoftodos{\@starttoc{tdo}\listtodoname}
\newcommand{\eps}{\varepsilon}
\newcommand{\R}{\mathbb{R}}
\newcommand{\RN}{{\mathbb{R}^N}}
\newcommand{\RD}{{\mathbb{R}^2}}
\newcommand{\weakto}{\rightharpoonup}
\DeclareMathOperator{\meas}{meas}
\DeclareMathOperator{\supp}{supp}
\renewcommand{\le}{\leslant}
\renewcommand{\ge}{\geslant}
\renewcommand{\a }{\alpha }
\renewcommand{\d }{\delta }
\renewcommand{\l }{\lambda}
\newcommand{\n }{\nabla }
\newcommand{\s }{\sigma }
\renewcommand{\O}{\Omega}
\newcommand{\HV}{H^1_V(\RD)}
\newcommand{\Ne}{\mathcal{N}}
\newcommand{\N}{\mathbb{N}}
\newcommand{\ird }{\int_{\RD}}
\newcommand{\irn }{\int_{\RN}}
\def\bbm[#1]{\mbox{\boldmath $#1$}}
\newcommand{\beq }{\begin{equation}}
\newcommand{\eeq }{\end{equation}}
\renewcommand{\le}{\leq}
\renewcommand{\ge}{\geq}
\newtheorem{theorem}{Theorem}[section]
\newtheorem{lemma}[theorem]{Lemma}
\newtheorem{definition}[theorem]{Definition}
\newtheorem{proposition}[theorem]{Proposition}
\newtheorem{corollary}[theorem]{Corollary}
\theoremstyle{definition}
\newtheorem{remark}[theorem]{Remark}
\title[Embedding of weighted Sobolev spaces and applications]{On the embedding of weighted Sobolev spaces \\with applications to a planar\\ nonlinear Schr\"{o}dinger equation}
\author[A. Azzollini]{Antonio Azzollini}
\address{A. Azzollini \newline\indent
Dipartimento di Matematica, Informatica ed Economia, \newline\indent Universit\`a degli
	Studi della Basilicata,
	\newline\indent
	Via dell'Ateneo Lucano 10, 85100
	Potenza, Italy}
\email{antonio.azzollini@unibas.it}
\author[A. Pomponio]{Alessio Pomponio}
\address{A. Pomponio
\newline\indent Dipartimento di Meccanica, Matematica e Management,\newline \indent
	Politecnico di Bari
	\newline\indent
	Via Orabona 4,  70125  Bari, Italy}
\email{alessio.pomponio@poliba.it}
\author[S. Secchi]{Simone Secchi}
\address{S. Secchi
\newline\indent Dipartimento di Matematica e Applicazioni \newline\indent
Università degli Studi di Milano - Bicocca
	\newline\indent
Via Roberto Cozzi 55, 20125  Milano, Italy}
\email{simone.secchi@unimib.it}
\thanks{A.A. and A.P. are partially supported by  INdAM - GNAMPA Project 2024 ``Metodi variazionali e
topologici per alcune equazioni di Schrodinger nonlineari" CUP E53C23001670001.
A.P. is partially financed by European Union - Next Generation EU - PRIN 2022 PNRR ``P2022YFAJH Linear and Nonlinear PDE's: New directions and Applications". 
S.S. is partially supported by INdAM - GNAMPA Project 2024 ``Aspetti geometrici e analitici di alcuni problemi locali e non-locali in mancanza di compattezza'' CUP E53C23001670001}
\subjclass[2020]{35J20, 35J60, 46E35}
\keywords{weighted Sobolev spaces, embedding's properties, nonlinear Schr\"odinger equation}
\begin{document} 

\begin{abstract}
	In this paper we study the embedding's properties for the weighted Sobolev space $H^1_V(\RN)$ into the Lebesgue weighted space $L^\tau_W(\RN)$. Here $V$ and $W$ are diverging weight functions. 
	The different behaviour of $V$ with respect to $W$ at infinity plays a crucial role. Particular attention is paid to the case $V=W$. This situation is very delicate since it depends strongly on the dimension and, in particular, $N=2$ is somewhat a limit case. As an application, an existence result for a planar nonlinear Schr\"odinger equation in presence of coercive potentials is provided.
\end{abstract}

 \maketitle 

\section{Introduction}

This note is devoted to the description of some embedding theorems for weighted Sobolev spaces in  $\mathbb{R}^N$ for a class of diverging weight functions. More precisely, we consider two continuous  potentials $V\colon \RN\to \R$ and $W \colon \mathbb{R}^N \to \mathbb{R}$  bounded below by a positive constant and diverging at infinity. 
We investigate the continuous and the compact embeddings of the $H^1_V(\RN)$ into $L^\tau_W(\RN)$ where $H_V^1 (\mathbb{R}^N)$ is the weighted Sobolev space defined as the completion of $C_0^\infty(\R^N)$ with respect to the norm
\begin{displaymath}
	\Vert u \Vert_V := \left( \int_{\mathbb{R}^N} \left( \vert \nabla u \vert^2 + V(x) \vert u \vert^2 \right) \, dx 
	\right)^{1/2},
\end{displaymath}
and $L_W^\tau (\mathbb{R}^N)$ is the weighted Lebesgue space
\begin{displaymath}
L^\tau_W(\RN):=\left\{u\in \mathscr{M}(\RN) \mid \int_{\mathbb{R}^N} W(x) \vert u \vert^{\tau} \, dx<+\infty\right\},
\end{displaymath}
endowed with the norm
\[
\|u\|_{W,\tau}:=\left(\int_{\mathbb{R}^N} W(x) \vert u \vert^{\tau} \, dx\right)^{\frac 1\tau}.
\]
The symbol~$\mathscr{M}(\mathbb{R}^N)$ denotes the set of all measurable
functions from $\mathbb{R}^N$ to $\mathbb{R}$.

The embedding properties between weighted Sobolev spaces, eventually also with a potential for the gradient term in norm, and weighted Lebesgue spaces have been extensively investigated: we refer the reader to the monograph \cite{opic} and the references therein. In this note we focus on the case of two potentials, both coercive, and a potential equal to one in the gradient term of the norm, for which we were unable to find useful results in the literature. 
Our aim is, therefore, to analyse this situation and apply our results to  the Schr\"odinger equation.

\bigskip

Roughly speaking, whenever $W$ \emph{dominates} $V$ at infinity, in general we cannot expect any embedding between $H_V^1 (\mathbb{R}^N) $ and $L_W^\tau (\mathbb{R}^N)$ and so we need to restrict our attention to the case of two potentials with a similar behaviour at infinity or to that in which $V$ \emph{prevails} over $W$ at infinity.  In particular, we first consider suitable conditions on $V$ and $W$ which ensure that 
\[
	\lim_{|x|\to +\infty}\frac{V(x)}{W(x)}=+\infty.
\]
This case is easier and, in Theorem \ref{thvwa}, we can prove that $H_V^1 (\mathbb{R}^N) $ is compactly embedded into $L_W^\tau (\mathbb{R}^N)$, for suitable $\tau$.

On the other hand, the case in which
\begin{equation}\label{eq:chall}
    \liminf_{|x|\to +\infty}\frac{V(x)}{W(x)}\in\R
\end{equation}
appears to be challenging, even in the simplest case $V=W$. To the best of our knowledge, the only result in the literature is contained in \cite{ADP}. 
In \cite[Theorem 2.2]{ADP}, in order to deal with a planar  Schr\"odinger equation with competing logarithmic self-interactions, the authors prove that, under a suitable relation between $V$ and its gradient, $H_V^1 (\mathbb{R}^2)$ is continuously embedded into $L_W^\tau (\mathbb{R}^2)$, for any $\tau \ge 2$. A precise statement appears in Theorem \ref{th:emb}. 
We point out that the analysis performed in \cite{ADP}  is restricted only to the continuous embedding and dimension $N=2$. \\
One of the main aims of the present paper consists in enlarging the investigation initiated in \cite{ADP}, studying the influence of the dimension in both continuous and compact embedding of $H_V^1 (\mathbb{R}^N)$ in $L_V^\tau (\mathbb{R}^N)$.\\
Taking into account \cite[Theorem 2.2]{ADP},  by Theorem \ref{Vnon}, Theorem \ref{pr:general} and Theorem \ref{th:main1}, we conclude that, for a considerable large class of positive and diverging potentials, we have:
	\begin{itemize}
		\item if $N\ge 3$, $H_V^1 (\mathbb{R}^N)$ is \emph{not} included into $L_V^\tau (\mathbb{R}^N)$, for any $\tau > 2$;
		\item if $N=2$, $H_V^1 (\mathbb{R}^2)$ is continuously but \emph{not} compactly embedded into $L_V^\tau (\mathbb{R}^2)$, for any $\tau > 2$;
		\item if $N=1$, $H_V^1 (\mathbb{R})$ is compactly embedded into $L_V^\tau (\mathbb{R})$  for any $\tau > 2$  and in $L^\infty(\R)$.
	\end{itemize}

Roughly speaking, the presence of a coercive potential $V$ \emph{improves} the usual Sobolev embedding theorem in dimension $N=1$, leaves it \emph{unchanged} in dimension $N=2$, while it basically \emph{destroys} the embedding in dimension $N \geq 3$.

We can say that $N = 2$ is a sort of limit case for the embedding properties of $H_V^1 (\mathbb{R}^N)$ into
$L_V^\tau (\mathbb{R^N})$.




\medskip

In addition, we analyse  the  case in which $V$ and $W$ are both radially symmetric functions. Denoting by $H^1_{V,{\rm rad}}(\RN)$ and $L^\tau_{W,{\rm rad}}(\RN)$, respectively, the subsets of radial functions contained in $H^1_{V}(\RN)$ and in $L^\tau_{W}(\RN)$, by means of the Strauss Radial Lemma (see \cite{strauss}) we show in Theorem \ref{thrad} that $H^1_{V,{\rm rad}}(\RN)$ is compactly embedded into $L^\tau_{W,{\rm rad}}(\RN)$, for suitable $\tau$, and for a large class of potentials. In particular, under certain conditions, $W$ could also dominate $V$ at infinity.

\bigskip

In light of  the considerations yielded in the first part, in the second part of the paper we focus our attention on semilinear Schr\"odinger equation with external potentials. One can find many papers about the problem
\begin{equation} \label{eq:2}
	-\Delta u + V(x)u = W(x) \vert u \vert^{p-1}u \quad \hbox{in $\mathbb{R}^N$},
\end{equation}
where $V$ and $W$ are continuous real-valued functions. If we look for solutions $u \in H^1(\mathbb{R}^N)$ of \eqref{eq:2} by means of variational methods, the lack of compactness of the embedding $H^1(\mathbb{R^N}) \subset L^p(\mathbb{R}^N)$ for $p<2N/(N-2)$ is typically overcome by adding suitable assumptions on $V$ and $W$. Rabinowitz considered in \cite{rabinowitz} the case $W\equiv 1$ with the requirement 
\begin{displaymath}
	0<\inf_{x \in \mathbb{R}^N} V(x) <  \liminf_{\vert x \vert \to +\infty} V(x).
\end{displaymath}
The case of \emph{competing} potentials $V$ and $W$ is harder to deal with, and most existence results require $V$ and $W$ to behave essentially in \emph{opposite} ways. Roughly speaking, by P.L. Lions' concentration-compactness principle, sequences of  ``almost critical'' points of the Euler functional associated to \eqref{eq:2} fail to be compact because they slide off to infinity. Compactness may be restored by assuming that $V$ and $W$ converge to some finite limits at infinity in such a way that this behaviour is not convenient for minimizing sequences. See, for example, \cite{CP} and the references therein.

The borderline case $V \equiv W$ under the assumption that $V$ diverges at infinity turns out to be very hard to handle: we do not have any problem at infinity to use as a barrier, and we do not have suitable compact embeddings in order to ensure the convergence of minimizing or Palais-Smale sequences. The construction of a solution $u \in H^1(\R^N)$ to 
\begin{equation*} 
	-\Delta u + V(x)u = V(x) \vert u \vert^{p-1}u 
\end{equation*}
when $\lim_{\vert x \vert \to +\infty} V(x)=+\infty$ remains essentially open. However, the continuous embedding of  $H_V^1 (\mathbb{R}^2)$  into $L_V^{p+1} (\mathbb{R}^2)$ obtained in the first part of the paper suggests an \emph{ansatz} to overcome such difficulties.
Then we consider in \eqref{eq:2} couples of potentials differing for a constant reducing the equation to the form
	\begin{equation}\label{eq:11}
	-\Delta u +( V(x)-\l) u=V(x)|u|^{p-1}u,\quad\hbox{ in }\RD, 
	\end{equation} 
for some $\l\in \R$, whose solutions could be found as critical points of the functional
\begin{displaymath}
	E(u) = \frac{1}{2} \int_{\mathbb{R}^2} \left( \vert \nabla u \vert^2 + \left( V(x) - \lambda \right)  \vert u \vert^2 \right)\, dx - \frac{1}{p+1} \int_{\mathbb{R}^2} V(x) \vert u \vert^{p+1} \, dx.
\end{displaymath}
Even if  the lack of the compact embedding of $\HV$ into $L^{p+1}_V(\RD)$ makes the variational approach to the problem not trivial,  in Theorem \ref{th:main} we are able to provide an existence result for \eqref{eq:11} by reducing the equation to an eigenvalue problem for the nonlinear operator
	\begin{displaymath}
		u\in H_V^1 (\mathbb{R}^2)\mapsto-\Delta u + V(x)(1-|u|^{p-1})u\in (H_V^1 (\mathbb{R}^2))'.
	\end{displaymath}

\medskip

The paper is organized as follows. In Section \ref{se:func} we study the embedding properties of $H^1_V(\RN)$ into $L^\tau_V(\RN)$ under various behaviour at infinity of the potentials $V$ and $W$. In Section \ref{se:exi} we deal with the nonlinear eigenvalue problem \eqref{eq:11}.

\section{Embedding properties of some weighted Sobolev spaces}\label{se:func}

We introduce a practical shorthand to describe the class of potential functions we will be working with in this paper.

\begin{definition}
	We will say that a function $V \colon \RN \to \R$ belongs to $\mathscr{C}^*(\RN)$ if $V$ is continuous and $\inf_{x \in \RN} V(x) >0$.
\end{definition}

In this section we study the embeddings of $H^1_V(\RN)$ into
$L^\tau_W(\RN)$ assuming that $V$ is unbounded and both $V$ and $W$
 belong to $\mathscr{C}^*(\RN)$.

\subsection{The general setting}\label{sec:general}

\

As a first step we consider the case of two potentials in $\mathscr{C}^*(\RN)$ with different
behaviour at infinity. We will see that if $V$ {\em dominates} $W$ at
infinity, namely under a condition guaranteeing $V(x)/W(x) \to +\infty$
as $\vert x \vert \to +\infty$, then $H^1_V(\RN)$ is compactly
embedded into $L^\tau_W(\RN)$, for suitable $\tau$. More precisely,
the following holds.

\begin{theorem}\label{thvwa}
  Let $N \geq 1$, $V \in \mathscr{C}^*$, $W \in \mathscr{C}^*$, and suppose that there exist a number~$\a\in (0,1)$
  and two positive constants $c$ and $C$ such that
\begin{equation}\label{VWa}\tag{$\mathcal{VW}$}
0<c\le W(x)\le C \big(V(x)\big)^\a \quad \text{ for all } x\in \RN.
\end{equation}
If \( \lim_{\vert x \vert \to +\infty}V(x)=+\infty \),
then $H^1_V(\RN)$ is compactly embedded in $L^\tau_W(\RN)$ for
\begin{equation} \label{tau}
\begin{cases}
\tau \ge 2 & \text{if }N=1,2,
\\
2\le \tau < \dfrac{2N-4\a}{N-2} & \text{if }N\ge 3.
\end{cases}
\end{equation}
The embedding is only continuous if $N\ge 3$ and $\tau = \frac{2N-4\a}{N-2}$.
\end{theorem}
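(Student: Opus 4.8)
The plan is to reduce both assertions to the \emph{unweighted} Sobolev embedding by means of a single H\"older interpolation, and then to upgrade continuity to compactness through a tail estimate that converts the coercivity of $V$ into genuine smallness. Throughout I use that $\inf V>0$ forces $H^1_V(\RN)\hookrightarrow H^1(\RN)$ continuously.

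\textbf{Continuous embedding.} I would start from the pointwise identity
\[
V(x)^{\a}\,|u|^{\tau}=\left(V(x)\,|u|^{2}\right)^{\a}\,|u|^{\tau-2\a},
\]
valid since $\tau\ge 2>2\a$. Using \eqref{VWa} and H\"older's inequality with conjugate exponents $1/\a$ and $1/(1-\a)$ gives
\[
\int_{\RN} W|u|^{\tau}\le C\left(\int_{\RN} V|u|^{2}\right)^{\a}\left(\int_{\RN}|u|^{q}\right)^{1-\a},\qquad q:=\frac{\tau-2\a}{1-\a}.
\]
The first factor is bounded by $\|u\|_V^{2\a}$. The second is controlled by the standard embedding $H^1(\RN)\hookrightarrow L^{q}(\RN)$ as soon as $q$ is admissible: every finite $q\ge 2$ works if $N=1,2$, while for $N\ge 3$ one needs $q\le 2^{*}=2N/(N-2)$. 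A direct computation shows that $q\le 2^{*}$ is equivalent to $\tau\le (2N-4\a)/(N-2)$ and $q\ge 2$ to $\tau\ge 2$, which yields the continuous embedding on the entire range \eqref{tau}, endpoint included (there $q=2^{*}$).

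\textbf{Compactness.} Let $u_n\weakto u$ in $H^1_V(\RN)$ and put $v_n:=u_n-u$, a bounded sequence with $\|v_n\|_V^2\le M$ and $v_n\weakto 0$. Writing $m_R:=\inf_{|x|>R}V(x)\to+\infty$, coercivity produces the decaying factor
\[
\int_{|x|>R}|v_n|^{2}\le\frac{1}{m_R}\int_{|x|>R} V|v_n|^{2}\le\frac{M}{m_R}.
\]
In the strictly interior range the exponent $q$ above satisfies $q<2^{*}$ (for $N\ge3$; for $N=1,2$ one simply fixes any larger admissible exponent), so interpolating $L^{q}(\{|x|>R\})$ between $L^{2}$ and $L^{2^{*}}$ with a weight $\theta>0$ on the $L^2$ part, and bounding the $L^{2^{*}}$ factor uniformly by the Sobolev constant, gives $\int_{|x|>R}|v_n|^{q}\to 0$ as $R\to\infty$, uniformly in $n$. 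Reinserting this into the H\"older estimate of the first step (now on $\{|x|>R\}$) shows that the tails $\int_{|x|>R} W|v_n|^{\tau}$ are uniformly small. On each ball $B_R$ the weight $W$ is bounded by continuity, and since $\tau<(2N-4\a)/(N-2)<2^{*}$ is subcritical, Rellich--Kondrachov yields $v_n\to 0$ in $L^{\tau}(B_R)$ along a subsequence. Combining uniformly small tails with local strong convergence gives $\|v_n\|_{W,\tau}\to 0$, and the usual subsequence argument promotes this to convergence of the whole sequence.

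\textbf{Main obstacle.} The continuous embedding and the local Rellich step are routine; the crux is the uniform tail estimate, where the coercivity of $V$ must be turned into smallness rather than mere boundedness. The mechanism is to use $V\ge m_R$ to extract a decaying $L^{2}$ factor of order $M/m_R$, and then to borrow the \emph{strict} subcriticality $q<2^{*}$ in order to interpolate this smallness up to the exponent $q$ demanded by the weighted H\"older estimate. This is precisely the reason why compactness degenerates to mere continuity at the endpoint $\tau=(2N-4\a)/(N-2)$, where $q=2^{*}$ leaves no interpolation room and Rellich's theorem fails.
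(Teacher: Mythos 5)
Your proposal is correct and takes essentially the same route as the paper: the identical H\"older splitting with exponents $1/\alpha$ and $1/(1-\alpha)$ reduces $\|u\|_{W,\tau}^{\tau}$ to $C\,\|u\|_{V}^{2\alpha}\,\|u\|_{q}^{\tau-2\alpha}$ with $q=(\tau-2\alpha)/(1-\alpha)$, so that everything hinges on the embedding $H^1_V(\RN)\hookrightarrow L^{q}(\RN)$, which is compact exactly in the subcritical range $q<2^{*}$ and only continuous at the endpoint. The sole difference is that the paper invokes the compact embedding of $H^1_V(\RN)$ into subcritical $L^{q}(\RN)$ for coercive $V$ as a known fact, whereas you reprove it inline (coercivity tail estimate, $L^{2}$--$L^{2^{*}}$ interpolation, Rellich--Kondrachov plus the subsequence argument), which is correct and merely more self-contained.
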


\begin{proof}
Let $u\in  H^1_V(\RN)$ and take $\tau$ as in \eqref{tau}.
We have 
\begin{align*}
\|u\|_{W,\tau}^\tau&=\irn W(x)|u|^\tau\, dx
=\irn\frac{W(x)}{\big(V(x)\big)^\a}\big(V(x)\big)^\a|u|^{2\a}|u|^{\tau-2\a}\, dx
\\
&\le C \left(\irn\left(\big(V(x)\big)^\a|u|^{2\a}\right)^\frac{1}{\a}\, dx\right)^\a
\left(\irn|u|^\frac{\tau-2\a}{1-\a}\, dx\right)^{1-\a}
\\
&= C \left(\irn V(x)|u|^{2}\, dx\right)^\a
\left(\irn|u|^\frac{\tau-2\a}{1-\a}\, dx\right)^{1-\a}
\\
&= C\|u\|_{V,\tau}^{2\a}\|u\|_{\frac{\tau-2\a}{1-\a}}^{\tau-2\a}
\\
&\le C\|u\|_V^{2\a}\|u\|_{\frac{\tau-2\a}{1-\a}}^{\tau-2\a}.
\end{align*}
Since  $H^1_V(\RN)$ is compactly embedded in $L^{\frac{\tau-2\a}{1-\a}}(\RN)$, the conclusion follows.
\end{proof}

We remark that a related result can be found in \cite{VSX}.
In the previous theorem $W$ could be diverging at infinity or not but, anyway, the case $V=W$ is excluded. This is, actually, the most intriguing and difficult situation, as the
following result suggests.

%
%

\begin{theorem}\label{Vnon}
Let $N\ge 3$ and $V \in \mathscr{C}^*(\RN)$. Suppose that there exist positive constants $c_1$, $c_2$ and $m$, and a  sequence $\{x_n\}_n$ in $\RN$ such that $V(x_n)\to +\infty$ and
\begin{equation*}
c_1V(x_n)\le V(x)\le c_2V(x_n), 
\quad\text{ for all } \ x\in B_{\frac{m}{\sqrt{V(x_n)}}}(x_n).
\end{equation*}
Then $H_V^1(\RN)\setminus L_V^\tau(\RN)$ is non-empty for all $\tau>2$. 
\end{theorem}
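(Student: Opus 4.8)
The plan is to refute the embedding by an explicit concentration argument: I would construct a single function that is a superposition of sharply localized bumps, one sitting on each ball $B_{r_n}(x_n)$ with $r_n := m/\sqrt{V(x_n)}$, arranged so that its total $\|\cdot\|_V$-energy is finite while its $L^\tau_V$-mass is infinite for \emph{every} $\tau>2$ at once. First I would record the crucial scaling. Fix $\vfi \in C^\infty_0(B_1(0))$ with $\vfi \not\equiv 0$ and, for amplitudes $A_n>0$ to be chosen later, set $\vfi_n(x) := A_n\,\vfi\!\left(\frac{x-x_n}{r_n}\right)$, which is supported in $B_{r_n}(x_n)$. The defining relation $V(x_n)\,r_n^2 = m^2$ forces the two terms of $\|\vfi_n\|_V^2$ to scale identically: using $c_1 V(x_n)\le V \le c_2 V(x_n)$ on the support and the change of variables $y=(x-x_n)/r_n$ one obtains
\[
\|\vfi_n\|_V^2 \le A_n^2 r_n^{N-2}\bigl(\|\n\vfi\|_2^2 + c_2 m^2\|\vfi\|_2^2\bigr) =: A_n^2 r_n^{N-2} K_1,
\]
while
\[
\irn V|\vfi_n|^\tau\,dx \ge c_1 m^2 \|\vfi\|_\tau^\tau\, A_n^\tau r_n^{N-2} =: A_n^\tau r_n^{N-2} K_2 .
\]
The same power $r_n^{N-2}$ appears in both, and this is precisely where $N\ge 3$ enters: the ratio between the $L^\tau_V$-mass and the $\|\cdot\|_V$-energy of one bump behaves like $r_n^{(N-2)(1-\tau/2)}$, which blows up as $r_n\to 0$ exactly because $N-2>0$ and $\tau>2$ (for $N=2$ this power is trivial, consistent with the continuous embedding recorded elsewhere).

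Next I would localize and concentrate. Since $V$ is continuous and $V(x_n)\to+\infty$, the sequence $\{x_n\}$ can have no accumulation point in $\RN$ (a finite limit point $x_0$ would force $V(x_0)=+\infty$), so $|x_n|\to+\infty$; combined with $r_n\to 0$, this lets me pass to a subsequence, not relabeled, for which the balls $B_{r_n}(x_n)$ are pairwise disjoint and, moreover, $r_n^{N-2}\le e^{-n^2}$. Both requirements are met simultaneously by choosing the indices inductively sparse enough, using $|x_n|\to\infty$ to separate the supports and $V(x_n)\to\infty$ to drive $r_n$ down as fast as desired. I would then fix the amplitudes by the normalization $A_n^2 r_n^{N-2}=2^{-n}$ and define $u:=\sum_n \vfi_n$ as the $\|\cdot\|_V$-limit of its partial sums.

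Finally I would verify the two claims. Because the supports are disjoint, the partial sums $\sum_{n\le M}\vfi_n\in C^\infty_0(\RN)$ are $\|\cdot\|_V$-Cauchy, since $\sum_n \|\vfi_n\|_V^2 \le K_1\sum_n 2^{-n}<\infty$; hence $u\in H^1_V(\RN)$. On the other hand, again by disjointness and the lower bound above,
\[
\|u\|_{V,\tau}^\tau \ge K_2 \sum_n A_n^\tau r_n^{N-2} = K_2\sum_n A_n^{\tau-2}\,2^{-n} = K_2 \sum_n 2^{-n\tau/2}\, \bigl(r_n^{N-2}\bigr)^{-(\tau-2)/2},
\]
and with $r_n^{N-2}\le e^{-n^2}$ the general term is bounded below by $2^{-n\tau/2}\, e^{n^2(\tau-2)/2}$, whose logarithm $\tfrac{(\tau-2)}{2}n^2-\tfrac{\tau}{2}n\log 2$ tends to $+\infty$ for every fixed $\tau>2$. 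Thus the series diverges and $u\notin L^\tau_V(\RN)$ for all $\tau>2$, giving $u\in H^1_V(\RN)\setminus L^\tau_V(\RN)$, which even yields a single witness for all exponents at once.

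The part requiring the most care is the simultaneous arrangement of disjoint supports and of a concentration rate $r_n\to 0$ fast enough that one fixed function defeats the embedding for \emph{all} $\tau>2$ simultaneously; the scaling estimates themselves are routine, and the use of $N\ge 3$ is localized entirely in the sign of the exponent $(N-2)(1-\tau/2)$.
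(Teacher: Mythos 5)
Your proposal is correct and follows essentially the same route as the paper's proof: concentrating bumps supported on the balls $B_{m/\sqrt{V(x_n)}}(x_n)$, the scaling computation in which the exponent $(N-2)(\tau-2)$ is positive precisely because $N\ge 3$ and $\tau>2$, passage to a subsequence with pairwise disjoint supports and rapidly shrinking radii, and a geometrically normalized sum that converges in $H^1_V(\RN)$ while its $L^\tau_V$-mass diverges. The only difference is in the normalization: the paper rescales its bumps by a $\tau$-dependent factor, so its witness depends on $\tau$, whereas your super-exponential decay $r_n^{N-2}\le e^{-n^2}$ yields a single function lying outside $L^\tau_V(\RN)$ for every $\tau>2$ simultaneously, a slightly stronger conclusion obtained at no extra cost.
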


\begin{proof}
In the following, for $n\ge 1$, we denote $\mathcal{B}'_n:=B_{\frac{m}{2\sqrt{V(x_n)}}}(x_n)$, $\mathcal{B}''_n:=B_{\frac{m}{\sqrt{V(x_n)}}}(x_n)$  and $\mathcal{A}_n:=\mathcal{B}_n''\setminus \mathcal{B}_{n}'$. Since $V \in \mathscr{C}^*(\RN)$, we may assume that $\vert x_n \vert \to +\infty$.

For any $n\ge 1$, let $u_n\in C^1\big(\RN,[0,V(x_n)]\big)$ be such that \[
u_n(x):=
\begin{cases}
	\big(V(x_n)\big)^{\frac{N-2}4} &\text{for }x\in \mathcal{B}_{n}',
	\\
	0 &\text{for }x\in \RN\setminus \mathcal{B}_{n}'',
\end{cases}
\]
and  with $|\n u_n(x)|\le 2\big(V(x_n)\big)^{\frac N4}$, for  $x\in\mathcal{A}_{n}$.
\\
We have that 
\begin{align*}
\irn |\n u_n|^2\, dx
&=\int_{\mathcal{A}_n}|\n u_n|^2\,dx
\le c \big(V(x_n)\big)^{\frac N2} \meas(\mathcal{A}_n)\le c,
\\
\irn V(x)|u_n|^2\,dx
&\le \int_{\mathcal{B}_n''}V(x)u_n^2\,dx
\le c_2 V(x_n)\big(V(x_n)\big)^{\frac{N-2}2} \meas(\mathcal{B}_n'')\le c,
\end{align*}
and so $\{u_n\}_n$ is a bounded sequence in $H_V^1(\RN)$, while,
for any $\tau>2$,
\[
\irn V(x)|u_n|^\tau\,dx\ge  c_1V(x_n)\big(V(x_n)\big)^{\frac{(N-2)\tau}4} \meas(\mathcal{B}_n')\to +\infty, \qquad
\text{as }n \to +\infty,
\]
namely $\{u_n\}_n$ is unbounded in $L_V^\tau(\RN)$.\\
Now observe that, up to subsequence, we can assume that the balls $\mathcal B''_n$ are pairwise disjoint, and that $\big(V(x_n)\big)^{\frac{(N-2)(\tau-2)}{4\tau}}\ge 2^n.$  
If we define 
\begin{displaymath}
	v_n(x):=\frac{u_n}{\big(V(x_n)\big)^{\frac{(N-2)(\tau-2)}{4\tau}}},
\end{displaymath}
there exist positive constants $C_1$ and $C_2$  such that, for all $n\ge 1$,
\begin{displaymath}
	\|v_n\|_V\le \frac{C_1}{V(x_n)^{\frac{(N-2)(\tau-2)}{4\tau}}}\le\frac{C_1}{2^n}
\end{displaymath}
 and $\|v_n\|_{V,\tau}\ge C_2$. The function $w=\sum_{n=1}^{\infty}v_n$ belongs to $H_V^1(\RN)\setminus L_V^\tau(\RN)$.

\end{proof}

\begin{remark}
Observe that any positive  uniformly continuous and coercive potential satisfies all the conditions of Theorem \ref{Vnon}.
\end{remark}

It seems to be more challenging to prove a \emph{positive} statement, that is a continuous embedding result under reasonable assumptions on $V$. To the best of our knowledge this situation   has been studied in \cite{ADP} only in dimension \(N=2\). More precisely, the following continuous embedding theorem is proved in \cite{ADP}.

\begin{theorem}[\cite{ADP}]\label{th:emb}
  Let $V \in \mathscr{C}^*(\R^2)$.  If
the distributional derivatives of $V$ are functions satisfying
\begin{equation}\label{gradv}\tag{$\mathcal{V}$}
  |\n V(x)|\le C V^{\frac 32}(x)\quad \hbox{ for a.e. $x \in \RD$}
\end{equation}   
for some constant~$C>0$, then the space $H^1_V(\RD) $ is continuously
embedded into $L^\tau_V(\RD)$ for all $\tau\ge 2$.
\end{theorem}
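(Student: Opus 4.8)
The plan is to exploit the fact that hypothesis \eqref{gradv} is \emph{equivalent} to the global Lipschitz continuity of $V^{-1/2}$ on $\RD$. Indeed $\n(V^{-1/2})=-\tfrac12 V^{-3/2}\n V$, so \eqref{gradv} gives $|\n(V^{-1/2})|\le \tfrac{C}{2}$ a.e., while $\inf V>0$ makes $V^{-1/2}$ bounded. Consequently the natural length scale $r(x):=\rho\,V(x)^{-1/2}$, with $\rho>0$ a small parameter to be fixed, is Lipschitz with constant $\rho C/2$, and on any ball $B_{r(x_0)}(x_0)$ the potential stays comparable to $V(x_0)$: choosing $\rho$ so small that $\rho C/2\le\tfrac12$, one gets $\tfrac49\,V(x_0)\le V(x)\le 4\,V(x_0)$ whenever $|x-x_0|<r(x_0)$. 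This slow variation of $V$ at its own scale is what makes a localization feasible.

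Since $r$ is Lipschitz with small constant, a Whitney/Besicovitch-type argument produces a countable family $\{x_i\}$ such that the balls $B_i:=B_{r(x_i)}(x_i)$ cover $\RD$ and have \emph{uniformly bounded overlap}, i.e.\ $\sum_i \mathbf{1}_{B_i}\le\Theta$ for some $\Theta=\Theta(\rho)$. Writing $V_i:=V(x_i)$ and $r_i:=\rho V_i^{-1/2}$ (so that, crucially, $V_ir_i^2=\rho^2$), the goal becomes the local estimate, for $u\in C_0^\infty(\RD)$ and $\tau\ge2$,
\begin{equation}\label{eq:loc}
\int_{B_i} V|u|^\tau\,dx\le K_1\Big(\int_{B_i}\big(|\n u|^2+V u^2\big)\,dx\Big)^{\tau/2}=:K_1\,a_i^{\tau/2},
\end{equation}
with $K_1$ independent of $i$. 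To obtain \eqref{eq:loc} I rescale to the unit ball via $\tilde u(y):=u(x_i+r_iy)$. The continuous Sobolev embedding $H^1(B_1)\hookrightarrow L^\tau(B_1)$, valid in dimension two for every $\tau\in[2,\infty)$ with a constant depending only on $\tau$, gives $\int_{B_1}|\tilde u|^\tau\le C_\tau\big(\int_{B_1}|\n\tilde u|^2+\tilde u^2\big)^{\tau/2}$. Undoing the scaling, $\int_{B_1}|\n\tilde u|^2=\int_{B_i}|\n u|^2$ (conformal invariance of the Dirichlet energy), $\int_{B_1}\tilde u^2=r_i^{-2}\int_{B_i}u^2$, and $\int_{B_i}|u|^\tau=r_i^2\int_{B_1}|\tilde u|^\tau$. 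Using $V\asymp V_i$ on $B_i$ to bound $r_i^{-2}\int_{B_i}u^2=\rho^{-2}V_i\int_{B_i}u^2$ by a multiple of $\int_{B_i}Vu^2$, and then multiplying through by $V_i$, the factor $V_ir_i^2=\rho^2$ absorbs all scale dependence and produces \eqref{eq:loc}.

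Finally I sum \eqref{eq:loc} over $i$. Because $\tau\ge2$, the elementary inequality $\sum_i a_i^{\tau/2}\le\big(\sum_i a_i\big)^{\tau/2}$ holds for nonnegative $a_i$ (since $a_i^{\tau/2}\le a_i\,(\sum_j a_j)^{\tau/2-1}$), while bounded overlap yields $\sum_i a_i=\int_{\RD}\big(\sum_i\mathbf{1}_{B_i}\big)(|\n u|^2+Vu^2)\,dx\le\Theta\,\|u\|_V^2$. As the $B_i$ cover $\RD$ and $V|u|^\tau\ge0$, combining these gives $\ird V|u|^\tau\le K_1(\Theta\|u\|_V^2)^{\tau/2}$, i.e.\ $\|u\|_{V,\tau}\le K\|u\|_V$; the general statement then follows by density of $C_0^\infty(\RD)$ in $\HV$. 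The main obstacle is the construction of the scale-adapted covering with a uniform overlap bound: the uniform constant $K_1$ and the clean summation both hinge on the balls having radius exactly tuned to $V^{-1/2}$ and on $V$ being essentially constant on each of them, which is precisely what the Lipschitz character of $V^{-1/2}$ from \eqref{gradv} supplies. I expect the delicate point to be choosing $\rho$ small enough to guarantee simultaneously the comparability of $V$ on each ball and the finite overlap; two-dimensionality enters only through the exponent range $[2,\infty)$ of the local Sobolev embedding, which is exactly why this scheme cannot deliver large $\tau$ in higher dimensions.
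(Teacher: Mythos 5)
Your proof is correct, but it follows a genuinely different route from the one in \cite{ADP}, which is the proof this paper relies on (the paper only quotes Theorem \ref{th:emb}; the underlying technique of \cite{ADP} is visible in Lemma \ref{le:lions} and in the one-dimensional Theorem \ref{th:main1}). The argument of \cite{ADP} is global and differential: one applies the critical embedding $W^{1,1}(\RD)\hookrightarrow L^2(\RD)$ to the auxiliary function $w=\sqrt{V}\,u^2$, uses \eqref{gradv} to absorb the term $|\nabla V|\,u^2/(2\sqrt{V})\le \tfrac{C}{2}Vu^2$ into $\|u\|_V^2$, obtaining $\|u\|_{V,4}\le C\|u\|_V$, and then reaches every $\tau\ge 2$ by the interpolation/iteration argument of \cite[Lemma 2.1]{ADP}. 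You instead read \eqref{gradv} geometrically---$V^{-1/2}$ is globally Lipschitz, so $V$ is slowly varying at its own length scale---and localize: a bounded-overlap covering by balls of radius $r_i=\rho V(x_i)^{-1/2}$, a rescaled Sobolev inequality on each ball (where the two facts special to $N=2$ are the scale invariance of the Dirichlet integral and the identity $V(x_i)r_i^2=\rho^2$), and the summation $\sum_i a_i^{\tau/2}\le\big(\sum_i a_i\big)^{\tau/2}$. Your scheme yields all $\tau\ge 2$ in one stroke without interpolation, gives \eqref{gradv} a transparent geometric meaning, and explains the dimensional dichotomy of the paper; its price is the Whitney/Besicovitch covering adapted to a Lipschitz radius function, which is classical but must be set up carefully (maximal family with disjoint half-balls, then a slight dilation of the radii so that covering and finite overlap hold simultaneously, with $\rho$ adjusted so that comparability of $V$ survives the dilation). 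The approach of \cite{ADP} is shorter and, importantly for this paper, localizes on balls of \emph{fixed} radius, which is exactly the form needed in the proof of Lemma \ref{le:lions}. One small correction: your closing claim that two-dimensionality enters ``only'' through the exponent range $[2,\infty)$ of the local embedding is not quite accurate. The absence of powers of $r_i$ in the rescaled gradient term is equally essential: for $N\ge 3$ the same computation produces the factor $r_i^{(N-2)(1-\tau/2)}$, which diverges as $V(x_i)\to+\infty$ for \emph{every} $\tau>2$, including subcritical ones; it is this scaling factor, not the exponent range, that destroys the local estimate in higher dimensions, in agreement with Theorem \ref{Vnon}.
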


\begin{remark}\label{re:VW}
If $V$ and $W$ are in $\mathscr{C}^*$, \eqref{gradv} is satisfied and $W(x)\leq C V(x)$ for all  $x\in\RD$ and for a suitable positive constant $C$, then $L_V^\tau(\RD)$ is continuously embedded into $L_W^\tau(\RD)$. So $H^1_V(\RD) $ is continuously embedded into $L^\tau_W(\RD)$ for all $\tau\ge 2$.
\end{remark}

It seems very hard to isolate, in the existing literature, precise conditions which ensure the continuous embedding of general weighted Sobolev spaces into weighted Lebesgue space without restrictive assumptions. An interesting statement appears in \cite[Theorem 2.4]{Avantaggiati}. Adapting the notation therein to ours, the author proves that if $\Omega$ satisfies the cone property in $\mathbb{R}^N$, then
\begin{displaymath}
    W^{1,p_0}_V(\Omega) \subset L^q_W(\Omega)
\end{displaymath}
continuously, provided that $1\leq p_0 <N$, $W \in C^1(\Omega) \cap L^\infty(\Omega)$ and $V$ are positive functions, and
\begin{displaymath}
    \left\vert \nabla W \right\vert \leq C V^{1/p_0}.
\end{displaymath}
The assumptions of this result exclude the case $p=N=2$. However, it witnesses that our assumption \eqref{gradv} has already been considered in some similar form.

Roughly speaking, assumption \eqref{gradv}  prevents the potential $V$ from
\emph{oscillating too much} at infinity. For example the potential
\begin{displaymath}
V(x)=|x|^2(\sin (e^{|x|})+2)+1
\end{displaymath}
does not satisfies \eqref{gradv}.

\bigskip

Now, assuming in the sequel the validity of the  continuous embedding $H^1_V(\RD) \hookrightarrow L^\tau_V(\RD),$
we investigate if such embedding is also compact.

As a first step we provide a general condition which assures the lack of the compact embedding, then we study some more specific cases.

\begin{proposition}\label{pr:general}
	Let $V\in \mathscr{C}^*(\RD)$  such that $\HV\hookrightarrow L^\tau_V(\RD)$ for some $\tau>2$. If $V$ satisfies 
\begin{enumerate}[label=$(\mathcal{V}_0)$,ref=$\mathcal{V}_0$]
\setcounter{enumi}{-1}
		\item\label{ipotesiV} there exist $c_1>0$ $,c_2>0$, $m>0$ and a  sequence $\{x_n\}_n \subset\RD$ with  $|x_n| \rightarrow +\infty$ and such that, for any $n\ge 1$,
			$$c_1 V(x_n)\le V(x)\le c_2 V(x_n), \quad\hbox{ in }B_{\frac {m}{\sqrt{V(x_n)}}}(x_n),$$
\end{enumerate}
	then $\HV$ is not compactly embedded into $L^\tau_V(\RD)$.
\end{proposition}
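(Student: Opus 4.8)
The plan is to detect the failure of \emph{compactness}, rather than of the embedding itself, by recycling the concentrating-bump construction from the proof of Theorem~\ref{Vnon}, specialised to $N=2$. The decisive feature of dimension two is that the natural height of the bumps is exactly $1$ (the exponent $(N-2)/4$ occurring there vanishes), which makes the resulting sequence bounded in $L^\tau_V(\RD)$ but simultaneously bounded \emph{away from zero}, so it cannot converge. Concretely, I would fix a profile $\eta\colon[0,+\infty)\to[0,1]$ of class $C^1$ with $\eta\equiv 1$ on $[0,1/2]$ and $\eta\equiv 0$ on $[1,+\infty)$, and set
$$u_n(x):=\eta\!\left(\frac{\sqrt{V(x_n)}}{m}\,|x-x_n|\right),$$
so that, writing $\mathcal{B}'_n:=B_{m/(2\sqrt{V(x_n)})}(x_n)$, $\mathcal{B}''_n:=B_{m/\sqrt{V(x_n)}}(x_n)$ and $\mathcal{A}_n:=\mathcal{B}''_n\setminus\mathcal{B}'_n$, one has $u_n\equiv 1$ on $\mathcal{B}'_n$, $u_n\equiv 0$ outside $\mathcal{B}''_n$, and $|\n u_n|\le C\sqrt{V(x_n)}$ on $\mathcal{A}_n$.

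The first step is the scaling bookkeeping. In $N=2$ the ball $\mathcal{B}''_n$ has measure of order $1/V(x_n)$, so the factor $V(x_n)$ produced by $|\n u_n|^2$ on the annulus, and the factor $c_2V(x_n)$ bounding the weight $V$ on $\mathcal{B}''_n$ via~$(\ref{ipotesiV})$, are each exactly cancelled by $\meas(\mathcal{B}''_n)$; this yields
$$\ird|\n u_n|^2\,dx+\ird V(x)|u_n|^2\,dx\le C\qquad\text{for all }n,$$
so $\{u_n\}$ is bounded in $\HV$. On the other hand, using $V(x)\ge c_1 V(x_n)$ and $u_n\equiv 1$ on $\mathcal{B}'_n$,
$$\|u_n\|_{V,\tau}^\tau=\ird V(x)|u_n|^\tau\,dx\ge c_1 V(x_n)\,\meas(\mathcal{B}'_n)\ge C_0>0$$
with $C_0$ independent of $n$, the point being that in dimension two this quantity is bounded below \emph{and} finite (contrast with Theorem~\ref{Vnon}, where for $N\ge 3$ it diverges).

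Finally I would invoke disjointness. Since $V\ge\inf V>0$ the radii $m/\sqrt{V(x_n)}$ are uniformly bounded, while $|x_n|\to+\infty$; hence, up to a subsequence, the supports $\mathcal{B}''_n$ are pairwise disjoint. For $n\ne k$ the identity $|u_n-u_k|^\tau=|u_n|^\tau+|u_k|^\tau$ then holds pointwise, so $\|u_n-u_k\|_{V,\tau}^\tau\ge 2C_0$, and $\{u_n\}$ admits no Cauchy—hence no convergent—subsequence in $L^\tau_V(\RD)$. A bounded sequence in $\HV$ with no $L^\tau_V$-convergent subsequence witnesses the lack of compactness. (Equivalently, one checks $u_n\weakto 0$ in $\HV$ by testing against elements of $C_0^\infty(\RD)$, whose supports eventually avoid those of $u_n$, and contradicts a hypothetical compact embedding against the bound $\|u_n\|_{V,\tau}\ge C_0^{1/\tau}$.) The only genuinely delicate point is this dimensional balance: it is precisely the exact cancellation of the powers of $V(x_n)$ in $N=2$ that keeps $\{u_n\}$ bounded in both $\HV$ and $L^\tau_V(\RD)$ while forbidding convergence, and beyond this careful scaling I expect no further obstacle.
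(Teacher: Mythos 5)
Your proposal is correct and follows essentially the same route as the paper: the same bump functions $u_n$ supported in $\mathcal{B}''_n$ with the same scaling computations showing boundedness in $\HV$ and a uniform lower bound on $\|u_n\|_{V,\tau}$. The only cosmetic difference is the final step, where you rule out convergent subsequences via pairwise disjoint supports (no Cauchy subsequence), while the paper observes that the travelling supports force $u_n\weakto 0$ in $\HV$ yet $\|u_n\|_{V,\tau}$ stays bounded away from zero; both are standard and you in fact mention the paper's variant parenthetically.
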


\begin{proof}
	In the following, for $n\ge 1$, we denote 
\begin{displaymath}
  \mathcal{B}'_n:=B_{\frac {m}{2\sqrt{V(x_n)}}}(x_n), \quad
  \mathcal{B}''_n:=B_{\frac {m}{\sqrt{V(x_n)}}}(x_n), \quad
  \mathcal{A}_n:=\mathcal{B}_n''\setminus \mathcal{B}_{n}'
\end{displaymath}
For any $n\ge1$, we consider a function $u_n\in C^1(\RD,[0,1])$  such that \(u_n=1\) in \(\mathcal{B}_{n}'\), \(u_n=0\) in \(\RD \setminus \mathcal{B}_n''\), and \(|\n u_n|\le (3/m) \sqrt{V(x_n)}
\) in \(\mathcal{A}_{n}\).

	By  assumption, we have that
	\begin{align*}
	&\ird |\n u_n|^2\, dx
	=\int_{\mathcal{A}_n}|\n u_n|^2\,dx
	\le \frac 9{m^2}V(x_n) \cdot \meas(\mathcal{A}_n)\simeq c>0,
	\\
	&\ird V(x)|u_n|^2\,dx
	\le \int_{\mathcal{B}_n''}V(x)\,dx
	\le\sup_{\mathcal{B}_n''}V\cdot \meas(\mathcal{B}_n'')\simeq c''>0,
	\\
	&\ird V(x)|u_n|^\tau\,dx
	\ge \int_{\mathcal{B}_n'}V(x)\,dx
	\ge\inf_{\mathcal{B}_n'}V \cdot \meas(\mathcal{B}_n')\simeq c'>0.
	\end{align*}
We conclude that $\{u_n\}_n$ is a bounded sequence in $\HV$ such that the support of $u_n$ is \emph{travelling} at infinity and then $u_n\weakto 0$ in $\HV$ but $\|u_n\|_{V, \tau}$ is bounded away from zero. This shows that $\HV$ is not compactly embedded into $L^\tau_V(\RD)$.
\end{proof}

Here we present two specific cases where $V$ satisfies \eqref{ipotesiV}.
\begin{corollary}
Let $V\in \mathscr{C}^*(\RD)$. If   $\HV\hookrightarrow L^\tau_V(\RD)$ for some $\tau>2$  and $V$ satisfies one of the following:
\begin{enumerate}[label=\arabic{*}.,ref=\arabic{*}]
\setcounter{enumi}{0}
\item\label{caso1} $V$ is  uniformly continuous;
\item\label{caso2} there exists $m>0$ such that $V$ satisfies the following asymptotic locally  Lipschitz-type property:
\begin{equation}\label{supershit}\tag{$\mathcal{V}_1$}
\sup_{\substack{y \neq x \\ |y-x|\le\frac {m}{\sqrt{V(y)}}}}
	\frac{|V(y)-V(x)|}{|y-x|}=o\big(V^{\frac 32}(y)\big)\quad \hbox{ for } |y|\to +\infty;
\end{equation}
\end{enumerate}
then the embedding of $\HV$ in $L^\tau_V(\RD)$ is not compact.
\end{corollary}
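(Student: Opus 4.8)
The plan is to deduce the corollary directly from Proposition~\ref{pr:general}: it suffices to show that each of the two hypotheses forces $V$ to satisfy condition~\eqref{ipotesiV}, after which the non-compactness of $\HV\hookrightarrow L^\tau_V(\RD)$ is immediate. In both cases I would begin by fixing \emph{any} sequence $\{x_n\}_n\subset\RD$ with $|x_n|\to+\infty$ (for instance $x_n=(n,0)$); the only thing left to verify is the two-sided bound $c_1V(x_n)\le V(x)\le c_2V(x_n)$ on the balls $B_{m/\sqrt{V(x_n)}}(x_n)$ for a suitable $m>0$. Setting $V_0:=\inf_{\RD}V>0$ (positive since $V\in\mathscr{C}^*(\RD)$), observe that the radii $r_n:=m/\sqrt{V(x_n)}$ satisfy $r_n\le m/\sqrt{V_0}$, so they are uniformly bounded; this uniform bound is what allows a uniform modulus of continuity to control the oscillation of $V$.

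For Case~\ref{caso1}, uniform continuity of $V$ provides a $\delta_0>0$ such that $|V(x)-V(y)|<V_0/2$ whenever $|x-y|<\delta_0$. Choosing $m<\delta_0\sqrt{V_0}$ forces $r_n<\delta_0$ for every $n$, so that for $x\in B_{r_n}(x_n)$ one gets $|V(x)-V(x_n)|<V_0/2\le V(x_n)/2$, that is $\tfrac12V(x_n)\le V(x)\le\tfrac32V(x_n)$. Hence \eqref{ipotesiV} holds with $c_1=1/2$ and $c_2=3/2$. The point to appreciate here is conceptual rather than computational: a uniform \emph{additive} modulus of continuity yields \emph{multiplicative} control on the shrinking balls precisely because $V$ is bounded below away from zero.

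For Case~\ref{caso2} I would use the $m$ furnished by \eqref{supershit} and, denoting by $L(x_n)$ the local Lipschitz-type quantity in \eqref{supershit} based at $x_n$,
\[
L(x_n):=\sup_{\substack{y\neq x_n\\ |y-x_n|\le m/\sqrt{V(x_n)}}}\frac{|V(y)-V(x_n)|}{|y-x_n|}=o\big(V^{\frac32}(x_n)\big)\qquad(|x_n|\to+\infty),
\]
estimate, for every $x\in B_{m/\sqrt{V(x_n)}}(x_n)$,
\[
|V(x)-V(x_n)|\le L(x_n)\,|x-x_n|\le L(x_n)\,\frac{m}{\sqrt{V(x_n)}}=o\big(V^{\frac32}(x_n)\big)\cdot\frac{m}{\sqrt{V(x_n)}}=o\big(V(x_n)\big).
\]
Thus for $n$ large one has $|V(x)-V(x_n)|\le\tfrac12V(x_n)$ uniformly on the ball, giving again $c_1=1/2$, $c_2=3/2$; discarding the finitely many small indices yields \eqref{ipotesiV}.

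The crux, and the only genuine computation, is the exponent bookkeeping in Case~\ref{caso2}: the ball radius scales like $V^{-1/2}$, so it lowers the power $\tfrac32$ of \eqref{supershit} to $1$, producing exactly the $o(V)$ oscillation that \eqref{ipotesiV} requires. This matching of exponents is precisely why the power $\tfrac32$ appears in \eqref{supershit} (and, not by accident, in \eqref{gradv}). I do not anticipate a real obstacle: the argument is a clean reduction to Proposition~\ref{pr:general}, the delicate step being only to keep careful track of the powers of $V$ in the second case.
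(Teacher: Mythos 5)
Your proposal is correct and follows essentially the same route as the paper's own proof: in both cases you verify condition \eqref{ipotesiV} on the shrinking balls $B_{m/\sqrt{V(x_n)}}(x_n)$ (using the lower bound $\inf V>0$ to convert the uniform modulus of continuity into multiplicative control in Case \ref{caso1}, and the radius scaling $V^{-1/2}$ to reduce the $V^{3/2}$ bound of \eqref{supershit} to an $o(V(x_n))$ oscillation in Case \ref{caso2}), and then invoke Proposition \ref{pr:general}. The only cosmetic difference is that the paper phrases Case \ref{caso2} with a fixed constant $m'\in(0,1/m)$ and constants $1\pm mm'$ instead of your ``$\le \tfrac12 V(x_n)$ for $n$ large'' tail argument, which is an equivalent bookkeeping of the same estimate.
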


\begin{proof}
{\sc Case }\ref{caso1}.
Let $\a>0$ be such that $V\ge \a$ in $\RD$. Since $V$ is uniformly continuous,  there exists $\d>0$ such that, if $|x-y|<\d$, we have 
	$|V(x)-V(y)|< \a/2$. Let  $\{x_n\}_n$ be  a divergent sequence. We set $m:=\d \sqrt{\a}$. For any $n\ge 1$, if $x\in B_{\frac {m}{\sqrt{V(x_n)}}}(x_n)$, we have $|x-x_n|<\d$ and so
	$$V(x)\le V(x_n)+|V(x)-V(x_n)|\le V(x_n)+\frac \a 2\le \frac 32 V(x_n)$$
	and 	$$\frac 12 V(x_n)\le V(x_n)-\frac \a 2\le V(x_n)- |V(x)-V(x_n)|\le V(x)$$
	and then property \eqref{ipotesiV} holds.
\vskip .2cm
{\sc Case }\ref{caso2}. Since \eqref{supershit} holds, there exists $y_0\in \RD$ with $|y_0|$ large enough such that, for any $|y|>|y_0|$ and $x\in B_{\frac {m}{\sqrt{V(y)}}}(y)$,
$$|V(y)-V(x)|\le m' \big(V(y)\big)^{\frac 32}|x-y|$$
where $m'\in (0,1/m)$.
Take any divergent sequence $\{x_n\}_n$ such that $|x_n|>|y_0|$, for all $n\ge 1.$ For any $x\in B_{\frac {m}{\sqrt{V(x_n)}}}(x_n)$, we have
\begin{align*}
V(x)&\le V(x_n)+|V(x)-V(x_n)|\le V(x_n) +m' \big(V(x_n)\big)^{\frac 32}|x-x_n|\le (1+mm') V(x_n)\\
V(x)&\ge V(x_n)-|V(x)-V(x_n)|\ge V(x_n) -m' \big(V(x_n)\big)^{\frac 32}|x-x_n|\ge (1-mm') V(x_n)
\end{align*}
and then property  \eqref{ipotesiV} holds.

\end{proof}
\begin{remark}
Even if  assumption \eqref{supershit} seems to be quite technical, there exists a large class of potentials satisfying it. In particular observe that, if we assume that $V$  is coercive and $V\in C^1(\RD\setminus B_R)$, for some $R>0$, we just have to verify the simpler condition

		\begin{equation}\label{megashit}\tag{$\mathcal{V}_2$}
		\exists\, \eps>0 \hbox{ such that }\sup_{z\in B_\eps (y)}|\n V(z)|=o\big(V^{\frac 32}(y)\big), \quad \hbox{ for } |y|\to +\infty.
		\end{equation}
		Indeed, by coercivity, for an arbitrary $m>0$ there exists $y_0\in \RD$ with $|y_0|$ large enough such that for any $|y|>|y_0|$ we have $\frac m{\sqrt{V(y)}}<\eps$. Assuming \eqref{megashit}, as a consequence of multidimensional mean value theorem we obtain \eqref{supershit} as follows
			\begin{equation*}
			\sup_{\substack{y \neq x \\ |y-x|\le\frac {m}{\sqrt{V(y)}}}}
				\left|\frac{V(y)-V(x)}{|y-x|}\right|\le \sup_{z\in B_\eps (y)}|\n V(z)|=o\big(V^{\frac 32}(y)\big),\quad \hbox{ for } |y|\to +\infty.
			\end{equation*} 
		Condition \eqref{megashit} is satisfied for example by all potentials of the type $|x|^\a$ with $\a>0$ but also by potentials growing exponentially fast likes $\exp \left( \vert x \vert^\alpha \right)$ with $\a>0$.

\end{remark}

We conclude with another counterexample where we assume neither regularity nor coercivity assumptions on the potential.	

\begin{proposition}\label{pr:noncomp}
If $V\colon \RD\to \R$ is bounded below by a positive constant and is a  measurable function such that $V(x)=n^2$, for all $n\ge 1$ and for all $x\in \RD$ such that $n-\frac 1n \le |x|\le n+\frac 1n$, then $\HV$ is not compactly embedded into $L^\tau_V(\RD)$, for $\tau>2$.
\end{proposition}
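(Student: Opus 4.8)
The plan is to reduce Proposition \ref{pr:noncomp} to the abstract non-compactness mechanism already isolated in Proposition \ref{pr:general}, namely the construction of a bounded sequence in $\HV$ whose mass travels to infinity while remaining bounded away from zero in $L^\tau_V(\RD)$. The potential here is not continuous, but it has a very explicit structure: on each annulus $A_n:=\{n-\frac1n\le|x|\le n+\frac1n\}$ it takes the constant value $n^2$. So rather than invoking Proposition \ref{pr:general} verbatim (which assumes $V\in\mathscr{C}^*$ and builds balls of radius $m/\sqrt{V(x_n)}$), I would rerun its argument directly, replacing the family of travelling balls $\mathcal{B}'_n,\mathcal{B}''_n$ by the annuli $A_n$ themselves.

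First I would pick test functions $u_n\in C^1(\RD,[0,1])$ supported in the slightly thickened annulus $\{n-\frac2n\le|x|\le n+\frac2n\}$, equal to $1$ on $A_n$, and with $|\n u_n|\le Cn$ (the natural gradient bound since the radial transition happens over a width $\sim 1/n$, and $\sqrt{V}=n$ on $A_n$). The width of the support is $O(1/n)$ and its mean radius is $\sim n$, so $\meas(\supp u_n)\simeq n^{N-2}\cdot\frac1n$; for $N=2$ this gives $\meas(\supp u_n)\simeq 1/n$. Then I would estimate the three integrals exactly as in Proposition \ref{pr:general}: $\ird|\n u_n|^2\,dx\simeq n^2\cdot\frac1n\cdot\frac1n=O(1)$ where the extra $1/n$ comes from the transition layer measure; $\ird V|u_n|^2\,dx\simeq n^2\cdot\frac1n\cdot\frac1n=O(1)$; while $\ird V|u_n|^\tau\,dx\ge\int_{A_n}V\,dx\simeq n^2\cdot\frac1n=n\to+\infty$ for $\tau>2$ since $u_n\equiv1$ there. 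Here the key point that makes $N=2$ special is that the $n^2$ weight is exactly compensated by the $1/n$ measure of the transition region twice over in the $H^1_V$-norm but only once in the $L^\tau_V$-integral, leaving a divergent factor $n^{\tau\cdot0}\cdot n^2\cdot\frac1n$.

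Once I have a bounded sequence in $\HV$ with $\|u_n\|_{V,\tau}^\tau\ge cn$, I would normalise exactly as in the proof of Theorem \ref{Vnon}: since the supports are disjoint (pairwise, as the annuli $A_n$ and their thickenings are disjoint for distinct $n$), the $u_n$ are orthogonal in $\HV$, so $u_n\weakto0$ weakly, and the sequence witnesses the failure of compactness directly. Alternatively, to be completely parallel to Theorem \ref{Vnon}, I could rescale to $v_n:=u_n/\|u_n\|_{V,\tau}$ or form a convergent series $w=\sum_n \lambda_n u_n$ with suitable $\lambda_n$ to produce an element of $\HV\setminus L^\tau_V(\RD)$; but for non-compactness the bounded travelling sequence suffices. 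I would remark that the hypothesis $\HV\hookrightarrow L^\tau_V(\RD)$ (continuous embedding) is \emph{not} needed here, which is the point of stating this as a standalone counterexample: it shows non-compactness can be read off directly from the weight, with no regularity on $V$.

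The main obstacle I anticipate is purely bookkeeping: getting the gradient bound and the measure of the transition layer to cooperate so that the $H^1_V$-norm stays bounded. Since the weight is $n^2$ and the gradient is forced to be $\sim n$ over a layer of radial width $\sim 1/n$, the $\int|\n u_n|^2$ and $\int V|u_n|^2$ terms each scale like $n^2\cdot(\text{width})\cdot(\text{radius})^{N-2}$, and in dimension $N=2$ this is $n^2\cdot\frac1n\cdot1=n$, which actually \emph{diverges} unless I choose the transition width more carefully. So the delicate step is to localise the cut-off to a layer of width $\sim 1/n^{2}$ (or, better, to use the given annulus of width $2/n$ for $u_n\equiv1$ but build the cut-off over a comparably thin shell and re-examine the precise exponents) so that $\int|\n u_n|^2$ remains $O(1)$ while $\int_{A_n}V$ still diverges. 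Pinning down these exponents correctly in $N=2$ is where I would spend the care; the rest follows the template of Proposition \ref{pr:general} mechanically.
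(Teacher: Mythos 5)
There is a genuine gap, and it is fatal to your construction rather than a matter of bookkeeping. In $\RD$, an annulus of radial width $\sim 1/n$ at radius $n$ has measure $\simeq n\cdot\frac1n=O(1)$, \emph{not} $O(1/n)$: you used $r^{N-2}$ instead of $r^{N-1}$ for the sphere-area scaling. Consequently, for your annular bumps, $\ird V(x)u_n^2\,dx\simeq n^2\cdot 1\to+\infty$, and with $|\n u_n|\sim n$ over a transition layer of measure $O(1)$ also $\ird|\n u_n|^2\,dx\simeq n^2\to+\infty$: the sequence is unbounded in $\HV$, so it witnesses nothing. You noticed this divergence yourself in the last paragraph, but the proposed repair (transition width $\sim 1/n^2$) goes the wrong way: the gradient becomes $\sim n^2$ over a layer of measure $\sim n\cdot n^{-2}=1/n$, giving $\ird|\n u_n|^2\,dx\sim n^3$. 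In general, a radial cut-off with transition width $w$ at radius $n$ costs $\ird|\n u_n|^2\,dx\sim w^{-2}\cdot nw=n/w$, which diverges for every admissible $w\lesssim 1/n$; no choice of width can make an annular cut-off bounded in $H^1(\RD)$. A secondary flaw: your thickened support $\{n-\frac2n\le|x|\le n+\frac2n\}$ exits the region where $V=n^2$ is prescribed, and there $V$ is only measurable and bounded below, so it may be arbitrarily large and the term $\ird V u_n^2\,dx$ is uncontrollable anyway.

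The paper's proof avoids annuli entirely: it takes \emph{point-localized} bumps, $u_n\in C(\RD,[0,1])$ equal to $1$ on $\mathcal{B}'_n:=B_{\frac1{2n}}(n,0)$ and supported in $\mathcal{B}''_n:=B_{\frac1n}(n,0)$, with $|\n u_n|\le 2n$; these balls sit inside the annulus where $V=n^2$, and their radius is exactly the scale $1/\sqrt{V(x_n)}=1/n$ of Proposition \ref{pr:general}. Then $\meas(\mathcal{B}''_n)\simeq n^{-2}$ compensates the weight: $\ird|\n u_n|^2\,dx\le 4n^2\meas(\mathcal{B}''_n)=O(1)$, $\ird V u_n^2\,dx\le n^2\meas(\mathcal{B}''_n)=O(1)$, while $\ird V|u_n|^\tau\,dx\ge n^2\meas(\mathcal{B}'_n)\ge c'>0$. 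Note that the $L^\tau_V$-norm stays merely \emph{bounded away from zero}, not divergent — which is all that non-compactness requires, since the supports travel to infinity and hence $u_n\weakto 0$ in $\HV$. Your intended reduction to the mechanism of Proposition \ref{pr:general} is the right idea, but its implementation demands bumps localized at a single point at scale $1/\sqrt{V}$; the annular geometry you chose is precisely what destroys the estimates in dimension two (and, relatedly, your claim $\|u_n\|_{V,\tau}^\tau\ge cn$ with $\|u_n\|_V$ bounded would prove the far stronger failure of the \emph{continuous} embedding, which this construction cannot deliver).
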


\begin{proof}
In the following, for $n\ge 1$, we denote $\mathcal{B}'_n:=B_{\frac 1{2n}}(n,0)$, $\mathcal{B}''_n:=B_{\frac 1n}(n,0)$  and $\mathcal{A}_n:=\mathcal{B}_n''\setminus \mathcal{B}_{n}'$.
For any $n\ge1$, pick a function $u_n\in C(\RD,[0,1])$  such that
\[
u_n(x):=
\begin{cases}
1 &\text{for }x\in \mathcal{B}_{n}',
\\
0 &\text{for }x\in \RD\setminus \mathcal{B}_{n}'',
\end{cases}
\]
and  with $|\n u_n(x)|\le 2n$, for  $x\in\mathcal{A}_{n}$.
\\
We have that, for any $\tau>2$,
\begin{align*}
\ird |\n u_n|^2\, dx
&=\int_{\mathcal{A}_n}|\n u_n|^2\,dx
=4n^2 \meas(\mathcal{A}_n)\simeq c>0,
\\
\ird V(x)|u_n|^2\,dx
&\le \int_{\mathcal{B}_n''}V(x)\,dx
=n^2 \meas(\mathcal{B}_n'')\simeq c>0,
\\
\ird V(x)|u_n|^\tau\,dx
&\ge \int_{\mathcal{B}_n'}V(x)\,dx
=n^2 \meas(\mathcal{B}_n')\simeq c'>0.
\end{align*}
We conclude as in Proposition \ref{pr:general}.
\end{proof}

Finally,  we deal with the last remaining case, namely the one dimensional case.
\begin{theorem}\label{th:main1}
 Let $V \in \mathscr{C}^*(\R)$ be coercive.  If
the distributional derivative of $V$ is a function satisfying
\begin{equation}\label{gradv1}\tag{$\mathcal{V'}$}
  |V'(x)|\le C V^{\frac 32}(x)\quad \hbox{ for a.e. $x \in \R$}
\end{equation}   
for some constant~$C>0$, then the space $H^1_V(\R) $ is compactly
embedded into $L^\infty(\R)$ and into $L^\tau_V(\R)$ for all $\tau>2$.
\end{theorem}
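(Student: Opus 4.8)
The plan is to distill everything from one pointwise estimate. First I would show that \eqref{gradv1} forces
\[
\sqrt{V(x)}\,u(x)^2 \le C'\,\|u\|_V^2 \qquad \text{for all } x\in\R \text{ and } u\in H^1_V(\R),
\]
with $C'=1+C/2$. For $u\in C_0^\infty(\R)$ this follows by differentiating $h:=\sqrt{V}\,u^2$, which lies in $W^{1,1}(\R)$ and has compact support because $V$ is continuous, bounded below by a positive constant, and lies in $W^{1,1}_{\mathrm{loc}}(\R)$. Writing $h'=\tfrac12 V^{-1/2}V'u^2+2\sqrt{V}\,uu'$, assumption \eqref{gradv1} bounds the first term by $\tfrac{C}{2}Vu^2$, while Young's inequality gives $2\sqrt{V}\,|u||u'|\le Vu^2+|u'|^2$; integrating $h(x)=\int_{-\infty}^x h'(t)\,dt$ over the whole line then yields the estimate. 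I would pass to arbitrary $u\in H^1_V(\R)$ by density, using that $H^1_V(\R)\hookrightarrow H^1(\R)\hookrightarrow L^\infty(\R)$ continuously, so that a sequence in $C_0^\infty(\R)$ approximating $u$ converges uniformly and the inequality survives in the limit.

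The decisive consequence is the \emph{uniform decay}
\[
|u(x)|\le \sqrt{C'}\,\|u\|_V\,V(x)^{-1/4},
\]
which, by coercivity of $V$, tends to $0$ as $|x|\to+\infty$ uniformly on bounded subsets of $H^1_V(\R)$. To prove the compact embedding into $L^\infty(\R)$ I would take $\{u_n\}$ with $\|u_n\|_V\le M$ and split $\R$ into a large ball and its complement. The decay estimate makes the tails uniformly small, $\sup_{|x|>R}|u_n(x)|\le \sqrt{C'}\,M\,\sup_{|x|>R}V^{-1/4}\to 0$ as $R\to+\infty$, uniformly in $n$; on each $[-R,R]$ the sequence is bounded in $H^1([-R,R])$ since $\inf V>0$, hence precompact in $C([-R,R])$ by the Rellich theorem. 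A diagonal argument over $R\in\N$ produces a subsequence converging uniformly on compacta, and the uniform tail control promotes this to convergence in $L^\infty(\R)$.

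For $L^\tau_V(\R)$ with $\tau>2$ the continuous embedding is immediate from the $L^\infty$ bound, because $\int_{\R}V|u|^\tau\le\|u\|_\infty^{\tau-2}\int_{\R}Vu^2\le C''\|u\|_V^\tau$. For compactness I would reduce to proving that $v_n\weakto 0$ in $H^1_V(\R)$ implies $\|v_n\|_{V,\tau}\to 0$. The extra power $\tau-2>0$ is exactly what generates the smallness: given $\delta>0$, the uniform decay supplies $R$ with $|v_n(x)|\le\delta$ for all $|x|>R$ and all $n$, so that
\[
\int_{|x|>R}V|v_n|^\tau\,dx\le \delta^{\tau-2}\int_{|x|>R}Vv_n^2\,dx\le \delta^{\tau-2}\sup_n\|v_n\|_V^2 ,
\]
which is uniformly small. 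On $[-R,R]$, weak convergence in $H^1_V(\R)$ restricts to weak convergence in $H^1([-R,R])$, hence to uniform convergence $v_n\to 0$ by Rellich, and since $V$ is bounded on $[-R,R]$ we get $\int_{-R}^R V|v_n|^\tau\to 0$. Adding the two contributions gives $\|v_n\|_{V,\tau}\to 0$.

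I expect the genuine difficulty to lie in the pointwise estimate rather than in the compactness machinery: one must justify the chain and product rules for $\sqrt{V}\,u^2$ when $V$ is only assumed to have a locally integrable distributional derivative, check that the resulting function is absolutely continuous so that the fundamental theorem of calculus applies, and confirm that the boundary term at infinity vanishes. Once this estimate and the attendant decay $|u(x)|\le\sqrt{C'}\|u\|_V V(x)^{-1/4}$ are secured, both compact embeddings follow from the standard splitting into a compact core, handled by Rellich, and a tail, controlled by the coercivity of $V$ together with the surplus exponent $\tau-2$ in the weighted case.
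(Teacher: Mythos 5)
Your proposal is correct and takes essentially the same approach as the paper: the core of both arguments is the estimate $\sqrt{V(x)}\,u^2(x)\le C\|u\|_V^2$, obtained by integrating the derivative of $\sqrt{V}u^2$ and using assumption $(\mathcal{V}')$, which yields the continuous embedding into $L^\infty(\R)$, the decay $|u(x)|\le C\|u\|_V V(x)^{-1/4}$ at infinity, and then compactness by splitting $\R$ into a compact core (where the one-dimensional Rellich theorem applies) and tails made uniformly small by coercivity. The only deviations are cosmetic: you use Young's inequality where the paper uses Cauchy--Schwarz, and you spell out the diagonal argument and the weak-null reduction for the $L^\tau_V$ case, which the paper leaves implicit.
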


\begin{proof}
First we prove the continuous embeddings. 
Let $u\in C^1_c(\R)$, namely of class $C^1$ and with compact support, and let $w=\sqrt{V}u^2$. Observe that, for any $x\in \R$ we have 
\begin{equation} \label{eq:4}
w(x)=\int_{-\infty}^{x}w'(t)\,dt=\int_{-\infty}^{x}\left(\frac{V'(t)}{2 \sqrt{V(t)}}u^2(t)+2 \sqrt{V(t)}u(t)u'(t)\right)dt.
\end{equation}
So, for any $x\in \R$, equation~\eqref{eq:4} yields
\begin{align*}
u^2(x)&\le C w(x)
\le \int_{\R}|w'|\, dx
\le C\int_{\R}\left(\frac{|V'(x)|}{ \sqrt{V(x)}}u^2(x)+ \sqrt{V(x)}|u(x)||u'(x)|\right)dx \nonumber
\\
&\le C \left(\|u\|_{V,2}^2+\|u\|_{V,2}\|u'\|_2\right)
\le C \|u\|_V^2.
\end{align*}
This implies that
\begin{equation}\label{key}
\|u\|_\infty \le C \|u\|_V, \qquad\text{for all }u\in C^1_c(\R).
\end{equation}
The embedding of $H_V^1(\R)$ into $L^\infty(\R)$ follows by density.

Let now $\tau>2$ and $u\in H_V^1(\R)$. We have 
\begin{equation} \label{eq:8}
\|u\|_{V,\tau}^\tau
\le \|u\|_\infty^{\tau-2}\|u\|_{V,2}^2
\le C\|u\|_V^2,
\end{equation}
and the conclusion about the continuous embeddings  follows easily.

\medskip

Let us now show the compact embeddings. Let us go back to equation \eqref{eq:4}. Fix $\varepsilon >0$, and choose $R>0$ so large that $1/V(x)<\varepsilon^4$ for each $x \in \mathbb{R} \setminus (-R,R)$. It is known (see \cite[Théorème VIII.7]{brezis}) that $H^1(-R,R)$ is compactly embedded into $L^\infty(-R,R)$. On the other hand, if $\vert x \vert \geq R$ then \eqref{eq:4} implies
\begin{equation*}
	\sqrt{V(x)} u^2(x) \leq C \Vert u \Vert_V^2,
\end{equation*}
and thus
\begin{equation} \label{eq:7}
	\vert u(x) \vert \leq \left( \sqrt{C} \Vert u \Vert_V \right)  \varepsilon.
\end{equation}
It now follows from \eqref{eq:7} that $H_V^1(\mathbb{R}\setminus (-R,R))$ is compactly embedded into $L^\infty(\mathbb{R} \setminus (-R,R))$. We conclude that $H_V^1(\mathbb{R})$ is compactly embedded into $L^\infty(\mathbb{R})$, and also in $L_V^\tau(\mathbb{R})$ by \eqref{eq:8}, for all $\tau>2$.
\end{proof}

\begin{remark}\label{re:VW1}
If $V$ and $W$ are two potentials from $\mathscr{C}^*(\R)$ such that $W(x)\leq C V(x)$ for all  $x\in\R$ and for a suitable positive constant $C$, then $L_V^\tau(\R)$ is continuously embedded into $L_W^\tau(\R)$. Therefore, if $V \in \mathscr{C}^*(\R)$ is coercive and satisfies \eqref{gradv1} then $H^1_V(\R) $ is compactly embedded into $L^\tau_W(\R)$ for all $\tau> 2$. 
\end{remark}

\subsection{The radial setting}

\

It is well-known that, for $N\ge 2$, the constraint of radial symmetry improves the compactness properties of Sobolev spaces, see \cite{willem}.
Here we  study the embedding of $H^1_{V,{\rm rad}}(\RN)$ into $L^\tau_{W,{\rm rad}}(\RN)$, where
$H^1_{V,{\rm rad}}(\RN)$ and $L^\tau_{W,{\rm rad}}(\RN)$ are, respectively, the subsets of radial functions contained in  $H^1_{V}(\RN)$ and $L^\tau_{W}(\RN)$.

\begin{theorem}\label{thrad}
	Let $N\ge 2$, $V$ and $W$ be two potentials in $\mathscr{C}^*(\RN)$ such that there exist $\phi:\RN \to R_+$ vanishing at infinity, $\bar \tau>2$, and $\tilde R>0$ with
	\begin{equation}\label{VWx}
    \tag{$\mathcal{VW}_{{\rm rad}}$}
	W(x)\le C \phi(x) V(x)|x|^{\frac{(N-1)(\bar\tau-2)}{2}},
	\quad\text{ whenever }|x|\ge \tilde R
	\end{equation}
	Then the following compact embeddings hold 
	$$H^1_{V,{\rm rad}}(\RN)\hookrightarrow \hookrightarrow L^\tau_{W,{\rm rad}}(\RN),
	\qquad\text{ for all }
	\begin{cases}
	\tau \ge  \bar \tau & \text{if }N=2,
	\\
	\bar \tau\le \tau < 2^* & \text{if }N\ge 3.
	\end{cases}$$ 
\end{theorem}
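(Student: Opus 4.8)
The plan is to split every integral $\int_{\RN} W|u|^\tau\,dx$ into a bounded part over a ball $B_{\tilde R}$ and an exterior part over $\{|x|\ge\tilde R\}$, handling the two regions by entirely different mechanisms: on the ball I rely on the classical Rellich--Kondrachov theorem, while on the exterior I exploit the pointwise decay of radial Sobolev functions furnished by the Strauss Radial Lemma together with the structural assumption \eqref{VWx}. As a preliminary I would record the comparison $\|u\|_{H^1(\RN)}\le C\|u\|_V$, which holds because $V$ is bounded below by a positive constant; hence $H^1_{V,{\rm rad}}(\RN)\hookrightarrow H^1_{{\rm rad}}(\RN)$ and the Strauss lemma gives, for radial $u$ and $N\ge2$, the decay $|u(x)|\le C|x|^{-\frac{N-1}{2}}\|u\|_V$ for $|x|\ge\tilde R$ (enlarging $\tilde R$ if necessary).

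The heart of the matter is the exterior estimate. On $\{|x|\ge\tilde R\}$ I write
\[
W|u|^\tau=\frac{W}{V}\,|u|^{\tau-2}\cdot V|u|^2,
\]
bound $\frac{W}{V}\le C\phi(x)|x|^{\frac{(N-1)(\bar\tau-2)}{2}}$ by \eqref{VWx}, and use the Strauss decay to cancel the power of $|x|$, via
\[
|u|^{\bar\tau-2}\le C|x|^{-\frac{(N-1)(\bar\tau-2)}{2}}\|u\|_V^{\bar\tau-2},
\]
while the leftover factor $|u|^{\tau-\bar\tau}$ (here $\tau\ge\bar\tau$ is essential, so the exponent is nonnegative) is controlled by $\|u\|_{L^\infty(|x|\ge\tilde R)}^{\tau-\bar\tau}\le C\|u\|_V^{\tau-\bar\tau}$, again by Strauss. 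The powers of $|x|$ cancel exactly, leaving only the vanishing weight $\phi$, and I obtain
\[
\int_{|x|\ge\tilde R}W|u|^\tau\,dx\le C\|u\|_V^{\tau-2}\Bigl(\sup_{|x|\ge\tilde R}\phi\Bigr)\int_{\RN}V|u|^2\,dx\le C\Bigl(\sup_{|x|\ge\tilde R}\phi\Bigr)\|u\|_V^\tau .
\]
Since $\phi$ vanishes at infinity, $\sup_{|x|\ge\rho}\phi\to0$ as $\rho\to+\infty$, which is precisely the device that makes the exterior tail uniformly small over bounded sets of $H^1_{V,{\rm rad}}(\RN)$.

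For the compactness itself, let $\{u_n\}$ be bounded in $H^1_{V,{\rm rad}}(\RN)$, say $\|u_n\|_V\le M$; passing to a subsequence, $u_n\weakto u$ in $H^1_{V,{\rm rad}}(\RN)$, with $u$ radial since the radial subspace is weakly closed. Given $\eps>0$ I choose $\rho\ge\tilde R$ so large that the exterior estimate applied to the radial functions $u_n-u$ (whose norms are $\le 2M$) yields $\int_{|x|\ge\rho}W|u_n-u|^\tau\,dx<\eps/2$ for every $n$. On the ball $B_\rho$ the weighted norm is equivalent to the standard $H^1(B_\rho)$ norm (as $V$ is continuous and positive), so Rellich--Kondrachov gives $u_n\to u$ in $L^\tau(B_\rho)$ for $\tau<2^*$ when $N\ge3$ and for every $\tau$ when $N=2$; since $W$ is bounded on $B_\rho$ this produces $\int_{B_\rho}W|u_n-u|^\tau\,dx<\eps/2$ for $n$ large. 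Adding the two contributions shows $\|u_n-u\|_{W,\tau}\to0$, which is the claimed compact embedding (the continuous one following a fortiori from the same splitting).

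I expect the exterior estimate to be the main obstacle: the exponent $\frac{(N-1)(\bar\tau-2)}{2}$ in \eqref{VWx} is tuned exactly so that the growth it permits in $W$ is annihilated by the $\frac{N-1}{2}$-decay of the Strauss lemma raised to the power $\bar\tau-2$. Arranging this cancellation so that \emph{only} the vanishing factor $\phi$ survives, with no residual power of $|x|$, is what drives the whole argument and also what pins down the lower threshold $\tau\ge\bar\tau$.
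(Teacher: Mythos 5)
Your proof is correct and follows essentially the same route as the paper's: both split $\RN$ into a ball, treated by local Rellich--Kondrachov compactness together with the local boundedness of $W$, and an exterior region, where the Strauss decay $|u(x)|\le C|x|^{-\frac{N-1}{2}}\|u\|_{H^1}$ combined with \eqref{VWx} leaves only the vanishing factor $\phi$, so the tails are uniformly small over the bounded sequence. The only cosmetic differences are that the paper reduces to the weak limit $u=0$ ``by linearity'' and applies the Strauss decay to the full factor $|u_n|^{\tau-2}$, absorbing the resulting nonpositive power of $|x|$ (using $\tau\ge\bar\tau$ and $R>1$), whereas you work with $u_n-u$ and split $|u|^{\tau-2}=|u|^{\bar\tau-2}|u|^{\tau-\bar\tau}$, cancelling the weight exactly and controlling the leftover factor by the $L^\infty$ bound from Strauss.
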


\begin{proof}
	Let $\{u_n\}_n$ be a bounded sequence of $H^1_{V,{\rm rad}}(\RN)$. There exists $u\in H^1_{V,{\rm rad}}(\RN)$ such that $u_n \weakto u$ weakly in $H^1_{V,{\rm rad}}(\RN)$. By linearity, we can assume that $u=0$.
	\\
	Let $R>1$. Since $V$ is bounded below by a positive constant, $\{u_n\}_n$ is a bounded sequence of $H^1_{\rm rad}(\RN)$ as well. So, by Strauss Radial Lemma (see \cite{strauss}) and \eqref{VWx},  we have for any $R\ge\tilde R$,
	\begin{align*}
	\int_{B_R^c} W(x)|u_n|^\tau\, dx 
	&=\int_{B_R^c} W(x)|u_n|^{\tau-2}u_n^2\, dx 
	\\
	&\le C\int_{B_R^c} \frac{W(x)}{|x|^{\frac{(N-1)(\tau-2)}{2}}}u_n^2\, dx
	\\
	&\le C\int_{B_R^c} \frac{\phi(x) V(x)|x|^{\frac{(N-1)(\bar\tau-2)}{2}}}{|x|^{\frac{(N-1)(\tau-2)}{2}}}u_n^2\, dx
	\\
	&\le C\sup_{B_R^c}\phi\int_{B_R^c} V(x)u_n^2\, dx
	\le  C\sup_{B_R^c}\phi.
	\end{align*}
	Therefore, for any $\eps>0$ we can find $R>0$ sufficiently large such that 
	\[
	\int_{B_R^c} W(x)|u_n|^\tau\, dx <\frac \eps 2 \qquad \text{for all }n\ge 1.
	\]
	On the other hand, since $V$ and $W$  are locally bounded and $H^1(\RN)$ is locally compact embedded into $L^\tau(\RN)$, we have that 
	\[
	\int_{B_R} W(x)|u_n|^\tau\, dx <\frac \eps 2 \qquad \text{for all $n$ sufficiently large},
	\]
	and we conclude that  
	\[
	\irn W(x)|u_n|^\tau\, dx \to 0 \qquad \text{as $n \to +\infty$}.
	\]
\end{proof}

\begin{remark}
    Observe that under condition \eqref{VWx},   $\liminf_{|x|\to +\infty}\frac{V(x)}{W(x)}$ may be   $+\infty$ or  any non-negative number and so also zero. In particular $V$ and  $W$ are allowed to be respectively bounded and coercive provided that 
        \begin{displaymath}
            \limsup_{|x|\to+\infty}\frac{W(x)}{\phi(x)|x|^{\frac{ (N-1)(\bar \tau-2)}{2}}}<+\infty.
        \end{displaymath}
\end{remark}

\begin{remark}
	The class $\mathscr{C}^*(\RN)$ is made of \emph{continuous} functions. This is rather natural requirement for applications to PDEs. It should be noticed, however, that our assumptions on the potential functions $V$ and $W$ may be slightly relaxed. Continuity may be replaced by measurability in most statements, but of course the assumption $V \in L^1_{\mathrm{loc}}$ is needed whenever distributional derivatives of $V$ are taken into account.
\end{remark}

\section{Applications to a nonlinear eigenvalue problem}\label{se:exi}

This section is devoted to the study of a planar nonlinear Schr\"odinger equation in presence of the same coercive potential in the linear term and in the nonlinearity. By what observed in Section \ref{se:func}, the fact that $\HV$ is continuously but not compactly embedded into $L_V^{p+1}(\RD)$ makes the problem quite tough.

\begin{theorem} \label{th:main}
Let $p\in (1,+\infty)$ and assume that $V\colon \RD\to\R$ is a $C^1$ positive coercive potential satisfying \eqref{gradv}.
Then there exists a nontrivial pair $(\bar u,\lambda)\in \big(\HV\cap C^2(\RD)\big)\times\R_+$, $u\ge 0$,  solving the problem \eqref{eq:11}.
Explicitly, 	
\begin{displaymath}
	\l=\frac2{p+3}\frac{\|\bar u\|_{V}^{2}}{\|\bar u\|_2^2}.
\end{displaymath}
\end{theorem}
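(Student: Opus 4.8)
The plan is to realize the pair $(\bar u,\lambda)$ as a suitably rescaled constrained minimizer. Variationally, \eqref{eq:11} is the eigenvalue equation $E'(u)=\lambda u$ for $E(u)=\frac12\|u\|_V^2-\frac1{p+1}\ird V|u|^{p+1}\,dx$, and the prescribed value $\lambda=\frac{2}{p+3}\frac{\|\bar u\|_V^2}{\|\bar u\|_2^2}$ is, via the Nehari identity $\|\bar u\|_V^2-\lambda\|\bar u\|_2^2=\ird V|\bar u|^{p+1}\,dx$, equivalent to the normalization $\ird V|\bar u|^{p+1}\,dx=\frac{p+1}{p+3}\|\bar u\|_V^2$. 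This suggests minimizing the scale-invariant quotient
\[
\mathcal Q(u)=\frac{\|u\|_V^2}{\left(\|u\|_2^2\,\ird V|u|^{p+1}\,dx\right)^{\frac{2}{p+3}}},\qquad u\in\HV\setminus\{0\},
\]
equivalently minimizing $\|u\|_V^2$ on $\mathcal M=\{u:\|u\|_2^2\,\ird V|u|^{p+1}\,dx=1\}$. Granting the infimum is attained at some $\bar v\ge0$, a Lagrange-multiplier computation on $\mathcal M$ shows $\bar v$ solves $-\Delta \bar v+V\bar v=\mu \bar v+\kappa V|\bar v|^{p-1}\bar v$ with $\mu=\frac{2}{p+3}\|\bar v\|_V^2/\|\bar v\|_2^2>0$ and $\kappa=\frac{p+1}{p+3}\|\bar v\|_V^2\|\bar v\|_2^2$; since $\mathcal Q$ is invariant under $\bar v\mapsto t\bar v$, rescaling $\bar u=s\bar v$ with $s^{p-1}=\kappa$ makes the nonlinear coefficient equal to $1$ while the ratio $\|\cdot\|_V^2/\|\cdot\|_2^2$, hence $\mu$, is unchanged. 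Setting $\lambda=\mu$ yields \eqref{eq:11} together with the asserted formula and $\lambda>0$.

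Next I would prove the infimum is attained. Because $V$ is coercive, $\HV$ embeds compactly into $L^q(\RD)$ for every $q\ge2$ (the standard consequence of coercivity). A minimizing sequence $\{u_n\}\subset\mathcal M$ is bounded in $\HV$, since $\|u_n\|_V^2$ converges to the infimum; thus $u_n\weakto\bar v$ in $\HV$ and $u_n\to\bar v$ strongly in $L^2(\RD)$, so $\|u_n\|_2\to\|\bar v\|_2$. Moreover $\bar v\neq0$: if $\|\bar v\|_2=0$ then $\|u_n\|_2\to0$, forcing $\ird V|u_n|^{p+1}\,dx\to\infty$ on $\mathcal M$, which contradicts the bound $\ird V|u_n|^{p+1}\,dx\le C\|u_n\|_V^{p+1}$ supplied by the continuous embedding $\HV\hookrightarrow L^{p+1}_V(\RD)$ of Theorem \ref{th:emb}. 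Replacing $u_n$ by $|u_n|$, which changes none of $\|u_n\|_V$, $\|u_n\|_2$, $\ird V|u_n|^{p+1}\,dx$, I may also assume $\bar v\ge0$.

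The heart of the argument is compactness: I must show that no part of the weighted nonlinear mass escapes to infinity, i.e.\ $\ird V|\bar v|^{p+1}\,dx=\lim_n\ird V|u_n|^{p+1}\,dx$. This is precisely what is threatened by the \emph{non}-compactness of $\HV\hookrightarrow L^{p+1}_V(\RD)$ from Proposition \ref{pr:general}: a travelling bump $v_n=u_n-\bar v\weakto0$ has $\|v_n\|_2\to0$ yet may keep $\ird V|v_n|^{p+1}\,dx$ bounded away from $0$. I would exclude this by a Brezis--Lieb splitting $\|u_n\|_V^2=\|\bar v\|_V^2+\|v_n\|_V^2+o(1)$ and $\ird V|u_n|^{p+1}\,dx=\ird V|\bar v|^{p+1}\,dx+\ird V|v_n|^{p+1}\,dx+o(1)$, combined with a strict sub-additivity (energy-threshold) inequality: comparing the minimizing value with the energy a concentrating bump would carry, namely a rescaled ground state of the limiting problem $-\Delta w+w=w^p$ on $\RD$, and using $p>1$, one shows that splitting off any mass strictly raises $\mathcal Q$. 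Here the factor $\|u\|_2^2$ in the denominator is decisive, since bumps carry vanishing $L^2$-mass while the compact embedding pins the $L^2$-mass of the bulk; hence $v_n\to0$ in $\HV$ and $\bar v$ is a genuine minimizer. This sub-additivity/threshold step is the main obstacle, precisely because no clean scaling identity is available for the non-homogeneous potential $V$ and the comparison must be made quantitatively.

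Finally, elliptic regularity bootstrap, using $V\in C^1$ together with \eqref{gradv}, upgrades $\bar u$ to a classical $C^2$ solution; the strong maximum principle then gives $\bar u\ge0$ (indeed $\bar u>0$), and the explicit value of $\lambda$ and the sign $\lambda>0$ follow directly from the construction.
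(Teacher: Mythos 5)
Your variational setup is in fact equivalent to the paper's: minimizing $\mathcal{Q}$, i.e.\ minimizing $\|u\|_V^2$ on $\mathcal{M}=\{u:\|u\|_2^2\ird V|u|^{p+1}\,dx=1\}$, is (after scaling) the same as the paper's minimization of $I(u)=\tfrac12\|u\|_V^2-\tfrac1{p+1}\|u\|_2^2\ird V|u|^{p+1}\,dx$ over its Nehari set; your Lagrange-multiplier algebra, the rescaling that normalizes the nonlinear coefficient, the nonvanishing of the weak limit, and the final formula for $\lambda$ are all correct and match the paper. The genuine gap is in the compactness step, which you rightly call the heart of the argument but do not close, and the sketch you give for it would fail. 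The inference ``bumps carry vanishing $L^2$-mass, hence $v_n\to0$ in $\HV$'' is a non sequitur: run your own scheme quantitatively. Set $\delta=\lim_n\ird V|v_n|^{p+1}\,dx$, $\theta=\|\bar v\|_2^2\,\delta$ and $S=\inf_{\mathcal M}\|\cdot\|_V^2$. Brezis--Lieb and the constraint give $\|\bar v\|_2^2\ird V|\bar v|^{p+1}\,dx=1-\theta$; rescaling the bulk and minimality give $\|\bar v\|_V^2\ge(1-\theta)^{2/(p+3)}S$; the continuous embedding gives $\lim_n\|v_n\|_V^2\ge(\delta/C)^{2/(p+1)}$. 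Combining,
\begin{equation*}
\Bigl(\frac{\delta}{C}\Bigr)^{2/(p+1)}\le S\bigl[1-(1-\theta)^{2/(p+3)}\bigr]\le S\,\theta .
\end{equation*}
Since $2/(p+1)<1$, this is indeed contradictory for \emph{small} $\delta$, but for $\theta$ of order one it merely says $\theta$ is bounded below by a constant depending on $S$, $\inf V$ and the embedding constant: a macroscopic escaping piece carrying a fixed fraction of the constraint mass survives this test. So the soft argument does not force $\delta=0$, and strict sub-additivity is exactly what is missing.

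The tool you propose to supply it --- an energy threshold obtained by comparing with ``a rescaled ground state of the limiting problem $-\Delta w+w=w^p$'' --- is conceptually wrong in this setting. Because $V$ is coercive there is \emph{no} autonomous problem at infinity: the sequences responsible for the failure of compactness of $\HV\hookrightarrow L_V^{p+1}(\RD)$ (Proposition \ref{pr:general}) are bumps of width $\sim V(x_n)^{-1/2}$ centered at $|x_n|\to+\infty$, whose height and weighted mass are tied to $V(x_n)$; after no rescaling do they converge to a fixed profile solving $-\Delta w+w=w^p$, so no threshold can be extracted from that equation (the paper stresses this very point: there is no problem at infinity to use as a barrier). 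Moreover your remainder $v_n=u_n-\bar v$ need not be a single bump at all: it may spread out (vanishing) or split further, and your Brezis--Lieb framework does not distinguish these cases. What replaces your missing step in the paper is a three-part concentration-compactness analysis: the weighted Lions-type Lemma \ref{le:lions} (this is where hypothesis \eqref{gradv} is used a second time, beyond the continuous embedding) to exclude vanishing; the Nehari-projection argument of Lemma \ref{le:dicho}, which exploits the sign of $J$ under a cutoff decomposition rather than any limit problem, to exclude dichotomy; and Proposition \ref{conc}, which pins the concentration point using $u_0\neq0$ together with the compactness of $\HV\hookrightarrow L^2(\RD)$. Until you supply substitutes for these three ingredients, your proof is incomplete precisely at its crucial point.
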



To achieve the proof, we define $I\colon \HV \to \R$ by
	\begin{equation}\label{I}
		I(u):=\frac 12 \ird \left(|\n u|^2 + V(x)u^2\right)dx -\frac 1{p+1}\|u\|_2^2\ird V(x)|u|^{p+1}\, dx,
	\end{equation}
for all $u\in \HV$, and
\begin{displaymath}
	\Ne:= \left\{ u\in H^1_V(\RD)\mid u\neq 0, \; J(u)=0 \right\},
\end{displaymath}
where $J\colon \HV \to \R$  is such that
\begin{displaymath}
	J(u):=I'(u)[u]=\ird \left(|\n u|^2 + V(x)u^2\right) dx -\frac {p+3}{p+1}\|u\|_2^2\ird V(x)|u|^{p+1}\, dx,
\end{displaymath}
for all $u\in \HV$.
\begin{lemma}\label{le:ne}
$\Ne$ is a natural constraint.
\end{lemma}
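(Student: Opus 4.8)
The plan is to establish that $\Ne$ is a natural constraint via the classical Lagrange multiplier argument: I will show that every critical point of the restriction $I|_{\Ne}$ is in fact a free critical point of $I$ on $\HV$. The entire argument hinges on proving that $J'(u)[u]$ does not vanish at any point of $\Ne$, so that the multiplier produced by constrained criticality is forced to be zero.

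First I would record that $I$ and $J$ are well defined and of class $C^1$ on $\HV$. Since $V$ is bounded below by a positive constant, $\HV$ embeds continuously into $L^2(\RD)$, so the factor $\|u\|_2^2$ is finite and smooth; moreover, assumption \eqref{gradv} and Theorem \ref{th:emb} yield the continuous embedding of $\HV$ into $L^{p+1}_V(\RD)$, which makes $\ird V(x)|u|^{p+1}\,dx$ finite and the associated functional $C^1$. Consequently $\Ne$ is the zero level set of the $C^1$ functional $J$, with the origin removed.

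The core computation is to differentiate $J$ along the radial direction $u$. Writing $A(u):=\ird V(x)|u|^{p+1}\,dx$ and recalling $J(u)=\|u\|_V^2-\frac{p+3}{p+1}\|u\|_2^2\,A(u)$, a direct calculation (using $J((1+t)u)=(1+t)^2\|u\|_V^2-\frac{p+3}{p+1}(1+t)^{p+3}\|u\|_2^2 A(u)$ and differentiating at $t=0$) gives
\[
J'(u)[u]=2\|u\|_V^2-\frac{(p+3)^2}{p+1}\|u\|_2^2\,A(u).
\]
On $\Ne$ one has $\|u\|_V^2=\frac{p+3}{p+1}\|u\|_2^2\,A(u)$, so substituting $\frac{p+3}{p+1}\|u\|_2^2 A(u)=\|u\|_V^2$ yields
\[
J'(u)[u]=2\|u\|_V^2-(p+3)\|u\|_V^2=-(p+1)\|u\|_V^2<0,
\]
since $p>1$ and $u\neq 0$ force $\|u\|_V>0$. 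In particular $J'(u)\neq 0$ at every point of $\Ne$, so $\Ne$ is a genuine $C^1$ manifold of codimension one.

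Finally I would close the argument by Lagrange multipliers. If $u\in\Ne$ is a critical point of $I$ constrained to $\Ne$, there exists $\mu\in\R$ with $I'(u)=\mu J'(u)$. Testing this identity against $u$ and using $I'(u)[u]=J(u)=0$ (because $u\in\Ne$) gives $0=\mu\,J'(u)[u]$. The previous step shows $J'(u)[u]\neq 0$, hence $\mu=0$ and therefore $I'(u)=0$; that is, $u$ solves the Euler--Lagrange equation in $\HV$. This proves that $\Ne$ is a natural constraint. The only delicate point is the sign computation of $J'(u)[u]$ on $\Ne$; I emphasize that the lack of compactness of the embedding $\HV\hookrightarrow L^{p+1}_V(\RD)$, which is the true source of difficulty for the existence theorem \eqref{eq:11}, plays no role here, the natural-constraint property being a purely structural feature of the functional.
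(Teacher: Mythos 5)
Your proof is correct and takes essentially the same route as the paper: both rest on the single computation that $J'(u)[u]<0$ on $\Ne$ (your value $-(p+1)\|u\|_V^2$ coincides, via the constraint identity $\|u\|_V^2=\frac{p+3}{p+1}\|u\|_2^2\ird V(x)|u|^{p+1}\,dx$, with the paper's $-(p+3)\|u\|_2^2\ird V(x)|u|^{p+1}\,dx$), which is exactly what forces the Lagrange multiplier to vanish. The only difference is one of presentation: you spell out the $C^1$ structure and the multiplier argument that the paper leaves implicit.
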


\begin{proof}
The conclusion follows observing that
\[
	J'(u)[u]= -(p+3)\|u\|_2^2\ird V(x)|u|^{p+1}\, dx<0,
\]
for all $u\in \Ne$.
\end{proof}

\begin{lemma}\label{le:p+1}
There exists $c>0$ such that $\|u\|_2\cdot\|u\|_{V,p+1}\ge c$, for any $u\in \Ne$.
\end{lemma}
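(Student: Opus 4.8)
The plan is to turn the defining relation of $\Ne$ into a lower bound by isolating exactly the product we want to control. For $u\in\Ne$ the condition $J(u)=0$ reads
\[
\|u\|_V^2=\frac{p+3}{p+1}\,\|u\|_2^2\,\|u\|_{V,p+1}^{p+1}.
\]
The trick is to regroup the right-hand side as $\frac{p+3}{p+1}\bigl(\|u\|_2\,\|u\|_{V,p+1}\bigr)^2\,\|u\|_{V,p+1}^{p-1}$, so that the quantity to be estimated from below, the product $\|u\|_2\,\|u\|_{V,p+1}$, stands alone and only the ``surplus'' factor $\|u\|_{V,p+1}^{p-1}$ remains to be bounded from above.

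Since $V$ is $C^1$, positive, coercive and satisfies \eqref{gradv}, Theorem \ref{th:emb} yields a constant $C_0>0$ with $\|u\|_{V,p+1}\le C_0\|u\|_V$ for all $u\in\HV$; together with the trivial bound $\|u\|_2\le(\inf V)^{-1/2}\|u\|_V$ this is the only input needed. Substituting both into the identity first produces a uniform lower bound on the norm itself: from $\|u\|_V^2\le\frac{p+3}{p+1}(\inf V)^{-1}C_0^{\,p+1}\|u\|_V^{\,p+3}$ one divides by $\|u\|_V^2>0$ to get $\|u\|_V\ge N_0>0$ on $\Ne$. Inserting instead $\|u\|_{V,p+1}^{p-1}\le C_0^{\,p-1}\|u\|_V^{\,p-1}$ into the regrouped identity gives the key estimate
\[
\bigl(\|u\|_2\,\|u\|_{V,p+1}\bigr)^2\ge\frac{p+1}{(p+3)\,C_0^{\,p-1}}\,\|u\|_V^{\,3-p}.
\]

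When $1<p\le 3$ the exponent $3-p$ is nonnegative, so $\|u\|_V\ge N_0$ makes the right-hand side at least $\frac{p+1}{(p+3)C_0^{p-1}}N_0^{\,3-p}>0$, which is the claim with $c$ the square root of this constant. The main obstacle is the range $p>3$. There the exponent $3-p$ is negative and, crucially, $\|u\|_V$ is \emph{not} bounded above on $\Ne$: scaling a bump that concentrates on a shrinking ball centred where $V$ is large produces elements of $\Ne$ along which $\|u\|_V\to+\infty$ and $\|u\|_2\to 0$, so the displayed inequality degenerates and no longer yields a positive lower bound. Controlling this concentration-at-infinity cannot be done by the soft interpolation above; it must exploit the precise coupling between the admissible width $\sim V^{-1/2}$ of such bumps and the growth of $V$ encoded in \eqref{gradv}, and it is here — rather than in the mere continuous embedding — that I expect the genuine difficulty of the statement to lie.
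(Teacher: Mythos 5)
Your treatment of the range $1<p\le 3$ is correct, and it follows the same route as the paper: the paper's entire proof consists of the chain
\[
\|u\|_{V,p+1}^2\le C\|u\|_V^2=C'\|u\|_2^2\,\|u\|_{V,p+1}^{p+1},
\]
i.e.\ Theorem \ref{th:emb} combined with the constraint $J(u)=0$, followed by the words ``and we conclude''. Dividing by $\|u\|_{V,p+1}^2$, what this chain actually yields is $\|u\|_2^2\|u\|_{V,p+1}^{p-1}\ge 1/C'$, which coincides with the stated product bound only when $p=3$; your two supplementary observations (the uniform bound $\|u\|_V\ge N_0>0$ on $\Ne$, and the regrouped identity giving $(\|u\|_2\|u\|_{V,p+1})^2\ge c\,\|u\|_V^{3-p}$) are exactly what is needed to close the argument for $1<p\le 3$, a step the paper silently skips.

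Your assessment of $p>3$ is, however, too optimistic in one respect: there is no hidden argument to find, because the statement is false in that range, and your own bump construction, made quantitative, is the counterexample. Take $V(x)=1+|x|^2$, which is $C^1$, positive, coercive and satisfies \eqref{gradv}. Fix $\phi\in C_c^\infty(B_1)$, $0\le\phi\le 1$, $\phi\not\equiv 0$, let $|x_n|\to+\infty$, $V_n:=V(x_n)$, and set $u_n(x):=\phi\big(\sqrt{V_n}\,(x-x_n)\big)$. Since $V/V_n\to 1$ uniformly on $B_{1/\sqrt{V_n}}(x_n)$, one has $\|\nabla u_n\|_2^2=\|\nabla\phi\|_2^2$, $\int_{\RD}Vu_n^2\,dx\to\|\phi\|_2^2$ and $\int_{\RD}V|u_n|^{p+1}\,dx\to\|\phi\|_{p+1}^{p+1}$, while $\|u_n\|_2^2=\|\phi\|_2^2\,V_n^{-1}$. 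The Nehari projection $t_nu_n\in\Ne$, determined by $t_n^{p+1}=\frac{p+1}{p+3}\,\|u_n\|_V^2\big(\|u_n\|_2^2\,\|u_n\|_{V,p+1}^{p+1}\big)^{-1}$, therefore satisfies $t_n^{p+1}\simeq\kappa V_n$ for some $\kappa>0$, and hence
\[
\|t_nu_n\|_2\,\|t_nu_n\|_{V,p+1}=t_n^2\,\|u_n\|_2\,\|u_n\|_{V,p+1}\simeq c\,V_n^{\frac{2}{p+1}-\frac 12}=c\,V_n^{\frac{3-p}{2(p+1)}}\longrightarrow 0
\]
whenever $p>3$. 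So the coupling between the width $V_n^{-1/2}$ and the growth of $V$ that you hoped would rescue the bound is precisely what destroys it, even for potentials satisfying \eqref{gradv}; no proof can exist.

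What is true for every $p>1$ --- and is exactly what the paper's displayed inequality proves --- is the bound $\|u\|_2^2\|u\|_{V,p+1}^{p-1}\ge c$ on $\Ne$, equivalently $\|u\|_2^{2/(p-1)}\|u\|_{V,p+1}\ge c'$. This weaker form is all that the paper uses afterwards: Lemma \ref{le:sigma} follows from your bound $\|u\|_V\ge N_0$, since $I(u)=\frac{p+1}{2(p+3)}\|u\|_V^2$ on $\Ne$; and Lemma \ref{le:u_0} only requires $\inf_n\|u_n\|_2>0$ for a sequence bounded in $L_V^{p+1}(\RD)$, which follows from $\|u_n\|_2^2\ge c\,\|u_n\|_{V,p+1}^{-(p-1)}$. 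So for $p>3$ the correct repair is to restate the lemma with the exponents your regrouping produces, not to look for a stronger proof of the product bound as stated.
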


\begin{proof}
Since $\HV$ is continuously embedded into $L_V^{p+1}(\RD)$, we have
\[
\|u\|_{V,p+1}^2\le C\|u\|_V^2=C'\|u\|_{2}^2\cdot \|u\|_{V,p+1}^{p+1},
\]
and we conclude.
\end{proof}

We set 
\begin{displaymath}
	\s=\inf_{u\in\Ne}I(u).
\end{displaymath}
\begin{lemma}\label{le:sigma}
We have that $\s>0$.
\end{lemma}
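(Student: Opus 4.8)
The plan is to show that the infimum of $I$ over the Nerhari-type manifold $\Ne$ is strictly positive by combining the defining constraint $J(u)=0$ with the coercivity-type lower bound supplied by Lemma~\ref{le:p+1}. First I would rewrite $I$ on $\Ne$ in a reduced form. For $u\in\Ne$ the constraint $J(u)=0$ reads
\[
\|u\|_V^2=\frac{p+3}{p+1}\|u\|_2^2\ird V(x)|u|^{p+1}\,dx,
\]
so that the (positive) nonlinear integral term can be eliminated. Substituting into \eqref{I}, the value of $I$ collapses to a single multiple of $\|u\|_V^2$: indeed
\[
I(u)=\frac12\|u\|_V^2-\frac{1}{p+1}\|u\|_2^2\ird V(x)|u|^{p+1}\,dx
=\frac12\|u\|_V^2-\frac{1}{p+3}\|u\|_V^2
=\frac{p+1}{2(p+3)}\|u\|_V^2.
\]
Thus on $\Ne$ the functional $I$ is, up to the fixed positive factor $\frac{p+1}{2(p+3)}$, exactly the squared norm $\|u\|_V^2$, and it suffices to bound $\|u\|_V$ away from zero on $\Ne$.

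Next I would produce that uniform lower bound on $\|u\|_V$. By Lemma~\ref{le:p+1} we have $\|u\|_2\cdot\|u\|_{V,p+1}\ge c>0$ for every $u\in\Ne$, and the continuous embedding $\HV\hookrightarrow L^{p+1}_V(\RD)$ (available from Theorem~\ref{th:emb} under \eqref{gradv}) gives $\|u\|_{V,p+1}\le C\|u\|_V$. Since $V\in\mathscr{C}^*(\RD)$ is bounded below by a positive constant, we also control $\|u\|_2\le C'\|u\|_V$. Combining these,
\[
c\le \|u\|_2\cdot\|u\|_{V,p+1}\le C''\|u\|_V^2,
\]
which yields $\|u\|_V^2\ge c/C''>0$ uniformly over $u\in\Ne$. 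Feeding this into the reduced expression for $I$ gives $I(u)\ge \frac{p+1}{2(p+3)}\cdot\frac{c}{C''}>0$ for all $u\in\Ne$, and taking the infimum yields $\sigma>0$.

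The only genuine subtlety is bookkeeping rather than a deep obstacle: one must be careful that the two auxiliary estimates $\|u\|_{V,p+1}\le C\|u\|_V$ and $\|u\|_2\le C'\|u\|_V$ both hold with constants independent of $u$, which they do because both follow from the continuous embedding and the positive lower bound on $V$. I expect the main (very mild) point to watch is simply ensuring that the constant $c$ from Lemma~\ref{le:p+1} and the embedding constants are combined in the correct direction so that the chain of inequalities terminates in a lower bound on $\|u\|_V^2$ rather than an upper bound; once the reduced form $I(u)=\frac{p+1}{2(p+3)}\|u\|_V^2$ on $\Ne$ is established, strict positivity of $\sigma$ is immediate.
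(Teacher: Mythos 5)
Your proposal is correct and follows essentially the same route as the paper: you derive the reduced identity $I(u)=\frac{p+1}{2(p+3)}\|u\|_V^2$ on $\Ne$ (this is \eqref{I|N} in the paper) and then invoke Lemma \ref{le:p+1}. The only difference is that the paper leaves implicit the final bookkeeping you spell out, namely that $c\le\|u\|_2\,\|u\|_{V,p+1}\le C\|u\|_V^2$ (via the continuous embeddings into $L^2(\RD)$ and $L^{p+1}_V(\RD)$) forces $\|u\|_V$ to stay bounded away from zero on $\Ne$.
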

\begin{proof}
Since, 
\begin{equation}\label{I|N}
I(u)=\frac{p+1}{2(p+3)}\ird \left(|\n u|^2 + V(x)u^2\right) dx
=\frac 12\|u\|_2^2\ird V(x)|u|^{p+1}\, dx \quad\text{for any }u \in \Ne, 
\end{equation}
by Lemma \ref{le:p+1} we deduce that $\s>0$.
\end{proof}

\begin{lemma}\label{le:u_0}
Let $\{u_n\}_n$ be a sequence in $\Ne$ such that $I(u_n)\to \s$,  there exists $u_0\in \HV\setminus\{0\}$ such that $u_n\rightharpoonup u_0$ in $\HV$. 
\end{lemma}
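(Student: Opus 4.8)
The plan is to obtain boundedness of the minimizing sequence \emph{for free} from the shape of $I$ on $\Ne$, extract a weakly convergent subsequence, and then rescue the nontriviality of the weak limit by appealing to the compact embedding into the \emph{unweighted} space $L^2(\RD)$ (available since $V$ is coercive), thereby circumventing the failure of compactness of $\HV\hookrightarrow L^{p+1}_V(\RD)$.

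First I would use the identity \eqref{I|N}, according to which on $\Ne$ one has $I(u)=\frac{p+1}{2(p+3)}\|u\|_V^2$. Hence $I(u_n)\to\sigma$ forces $\|u_n\|_V^2\to\frac{2(p+3)}{p+1}\sigma$, so that $\{u_n\}_n$ is bounded in $\HV$ with no further estimate needed. As $\HV$ is a Hilbert space, a subsequence (not relabelled) satisfies $u_n\weakto u_0$ in $\HV$ for some $u_0\in\HV$.

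The substantial point is to show $u_0\neq 0$. By Lemma \ref{le:p+1} we have $\|u_n\|_2\,\|u_n\|_{V,p+1}\ge c>0$, while the continuous embedding of Theorem \ref{th:emb} gives $\|u_n\|_{V,p+1}\le C\|u_n\|_V$, a quantity bounded by the previous step; combining the two inequalities yields $\|u_n\|_2\ge\delta>0$ uniformly in $n$. Since $V$ is coercive, $\HV$ embeds \emph{compactly} into $L^2(\RD)$ — this being the compactness of $\HV$ into unweighted Lebesgue spaces already invoked in the proof of Theorem \ref{thvwa} — so along the subsequence $u_n\to u_0$ strongly in $L^2(\RD)$, whence $\|u_0\|_2=\lim_n\|u_n\|_2\ge\delta>0$ and $u_0\neq 0$.

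The only genuine obstacle is exactly this last nontriviality: one cannot pass to the limit directly in the weighted quantity $\ird V(x)|u_n|^{p+1}\,dx$, because $\HV\hookrightarrow L^{p+1}_V(\RD)$ is merely continuous and \emph{not} compact, as established in Section \ref{se:func}, so under the weight the mass could in principle leak to infinity. The device that unlocks the proof is to transfer the lower bound supplied by the Nehari constraint (Lemma \ref{le:p+1}) onto the unweighted $L^2$ norm, where coercivity of $V$ does restore compactness and so forbids the vanishing of $u_0$.
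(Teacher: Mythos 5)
Your proposal is correct and follows essentially the same route as the paper: boundedness of $\{u_n\}_n$ from the identity \eqref{I|N}, weak convergence in $\HV$, strong convergence in $L^2(\RD)$ via the compact embedding granted by the coercivity of $V$, and nontriviality of $u_0$ by combining Lemma \ref{le:p+1} with the boundedness of $\{u_n\}_n$ in $L^{p+1}_V(\RD)$. The only difference is expository: you make explicit the uniform lower bound $\|u_n\|_2\ge\delta>0$ that the paper leaves implicit.
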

\begin{proof}
By \eqref{I|N}, it is easy to see that $\{u_n\}_n$ is bounded in $\HV$ and then, up to a subsequence, there exists $u_0\in \HV$ such that $u_n\rightharpoonup u_0$ in $\HV$.  
\\
Since $V$ is coercive, $\HV$ is compactly embedded into $L^2(\RD)$ and so
	\begin{equation}\label{eq:str}
		u_n\to u_0 \hbox{ in }L^2({\RD}).
	\end{equation}
Since $\{u_n\}_n$ is bounded in $\HV$ and hence also in $L^{p+1}_V(\RD)$,  by Lemma \ref{le:p+1}, we deduce that $u_0\neq 0$.
\end{proof}

For any $n\ge 1$ and an arbitrary $\alpha \in \left(\frac 1{p+3},\frac 12\right)$ we define the positive measure \(\nu_n\) by
\begin{equation*}
\nu_n(\O):= 
\left(\frac 12 -\a\right)\int_\O\left(|\n u_n|^2 + V(x)u_n^2\right) dx+\frac{\alpha(p+3)-1}{p+1}\|u_n\|_2^2\int_\O V(x)|u_n|^{p+1}\, dx
\end{equation*}
for each measurable subset~$\O\subset \R^2$,  and
\begin{equation*}
G(u):= \left(\frac 12 -\a\right)\ird\left(|\n u|^2 + V(x)u^2\right) dx+\frac{\alpha(p+3)-1}{p+1}\|u\|_2^2\ird V(x)|u|^{p+1}\, dx
\end{equation*}
for any $u\in H^1_V(\RD)$.
Of course $\nu_n(\RD)=G(u_n)=I(u_n)=\s+o_n(1)$.

By \cite{Lions1,Lions2} there are three possibilities:
\begin{itemize}
	\item[1.] {\it concentration:} there exists a sequence $\{\xi_n\}_n$ in $\RD$ with the following property: for any $\epsilon> 0$, there exists $r = r(\epsilon) > 0$ such that
	\[
	\nu_n(B_r(\xi_n))\ge \s-\epsilon;
	\]
	\item[2.] {\it vanishing:} for all $r > 0$ we have that
	\[
	\lim_{n \to +\infty} \sup_{\xi\in\RD} \nu_n(B_r(\xi))=0;
	\]
	\item[3.] {\it dichotomy:} there exist two sequences of positive measures $\{\nu_n^1\}_n$ and $\{\nu_n^2\}_n$, a positively diverging sequence of numbers $\{R_n\}_n,$ and $\tilde \s \in (0,\s)$ such that
	\begin{align*}
	&0\le \nu_n^1 + \nu_n^2\le \nu_n,\quad \nu_n^1(\RD)\to \tilde \s,\quad \nu_n^2(\RD)\to \s-\tilde \s, \\
	& \operatorname{supp} \nu_n^1\subset B_{R_n},\quad \operatorname{supp} \nu_n^2\subset B_{2R_n}^c.
	\end{align*}			
\end{itemize}
We are going to rule out both vanishing and dychotomy.

In order to exclude the vanishing, we need the equivalent of  \cite[Lemma I.1]{Lions2} in $\HV$.
\begin{lemma}\label{le:lions}
Suppose that $V$ satisfies \eqref{gradv}. Let $\{u_n\}_n$ be a bounded sequence in $\HV$ such that
\begin{equation}\label{lions}
\lim_{n \to +\infty} \sup_{\xi\in\RD} \int_{B_r(\xi)} V(x) u_n^2\, dx=0,
\end{equation}
for some $r>0$, then $u_n \to 0$ strongly in $L_V^\tau(\RD)$, for all $\tau>2$, as $n \to +\infty$.
\end{lemma}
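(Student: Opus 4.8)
The plan is to adapt the classical Lions concentration-compactness lemma (\cite[Lemma I.1]{Lions2}), whose usual statement controls an $L^\tau$ norm by an $H^1$ norm together with a uniform-smallness hypothesis on local $L^2$ masses. Here the weight $V$ appears on both sides, so the first step is to recover a \emph{weighted} local interpolation inequality. For a fixed cube (or ball) $Q$ of side $r$, I would estimate $\int_Q V|u|^\tau\,dx$ by writing $|u|^\tau = |u|^{\tau-2}\,|u|^2$ and applying H\"older with a suitable pair of exponents, so as to split off a factor controlled by the smallness assumption \eqref{lions}, namely $\left(\int_Q V u_n^2\,dx\right)^{\theta}$, and a factor controlled by the full weighted Sobolev norm on $Q$. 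The key point is that, because of the continuous embedding $H^1_V(\RD)\hookrightarrow L^\tau_V(\RD)$ guaranteed by Theorem \ref{th:emb} (which is exactly why \eqref{gradv} is assumed in the hypotheses), on each such piece we can bound $\int_Q V|u_n|^\tau\,dx$ by a constant times $\|u_n\|_{V,Q}^{\tau}$, where $\|\cdot\|_{V,Q}$ denotes the restriction of the $\|\cdot\|_V$ norm to $Q$.

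Next I would tile $\RD$ by a countable family of congruent cubes $\{Q_i\}_i$ of side $r$ with bounded overlap, and sum the local estimates. The mechanism is the standard one: on each $Q_i$ we gain a power $\left(\sup_\xi \int_{B_r(\xi)} V u_n^2\right)^{\theta}\to 0$ from \eqref{lions}, which factors out of the sum, while the remaining sum $\sum_i \|u_n\|_{V,Q_i}^{2}$ reconstructs (up to the finite overlap constant) the global quantity $\|u_n\|_V^2$, which is bounded by hypothesis. Choosing the interpolation exponents so that the exponent on the remaining factor is exactly $2$ is what makes the summation close; this forces $\theta=(\tau-2)/\tau$ on the small factor. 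Putting the pieces together yields
\begin{displaymath}
\int_{\RD} V|u_n|^\tau\,dx \le C\,\Big(\sup_{\xi\in\RD}\int_{B_r(\xi)} V u_n^2\,dx\Big)^{\frac{\tau-2}{\tau}\cdot\frac{\tau}{2}}\,\|u_n\|_V^{2}\longrightarrow 0,
\end{displaymath}
which is the desired conclusion $u_n\to 0$ in $L^\tau_V(\RD)$.

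The main obstacle I anticipate is the correct bookkeeping of the weight $V$ in the local-to-global passage. In the classical unweighted lemma the Sobolev embedding is applied uniformly on each cube with a constant independent of the cube; here one must be careful that the weighted embedding constant furnished by Theorem \ref{th:emb}, and in particular the structural assumption \eqref{gradv} that makes it hold, can be used on each $Q_i$ with a constant that does \emph{not} degenerate as $|x|\to\infty$. The cleanest route is to verify that the local inequality can be stated in scale-invariant, translation-uniform form, so that a single constant $C$ serves all cubes simultaneously; this is precisely where the hypothesis \eqref{gradv} (preventing $V$ from oscillating too much at infinity) does the essential work, and establishing this uniformity is the crux of the argument.
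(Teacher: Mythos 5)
Your proposal follows the right skeleton (Hölder splitting on congruent pieces, bounded-overlap covering, summation closed by making the Sobolev-norm exponent at least $2$), but it has a genuine gap, and you have in fact flagged it yourself: the whole argument rests on a \emph{local, translation-uniform} weighted embedding $\|u\|_{L^q_V(Q)}\le C\|u\|_{H^1_V(Q)}$, with a single constant $C$ valid for every cube $Q$ of side $r$, and you never prove it. It does not follow from Theorem \ref{th:emb}: that theorem is a \emph{global} statement about functions in $\HV$, the completion of $C_0^\infty(\RD)$ in the $\|\cdot\|_V$ norm, so it cannot be applied to the restriction $u_n|_{Q_i}$. Any attempt to repair this by extending $u_n|_{Q_i}$ to all of $\RD$ runs into precisely the uniformity issue you mention at the end: the extension would have to be bounded in the \emph{weighted} norms, with a bound independent of the position of $Q_i$, i.e.\ independent of how large $V$ is on $Q_i$, and one would also need the extended weight to keep satisfying \eqref{gradv}. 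Declaring this verification ``the crux of the argument'' and leaving it unestablished means the proof is missing exactly its key step; the remaining ingredients are the standard Lions machinery. (A minor additional slip: the exponent $\frac{\tau-2}{\tau}\cdot\frac{\tau}{2}=\frac{\tau-2}{2}$ in your final display cannot arise as a Hölder weight when $\tau\ge 4$, since it is then $\ge 1$; the bookkeeping must be redone there.)

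It is instructive to see how the paper fills this gap, because it is by a concrete device rather than by localizing Theorem \ref{th:emb}. The paper proves the uniform local inequality directly, for the single exponent $\tau=4$: one writes
\begin{equation*}
\int_{B_r(\xi)}V(x)u_n^4\,dx=\bigl\|\sqrt{V}\,u_n^2\bigr\|_{L^2(B_r(\xi))}^2
\end{equation*}
and applies the embedding $W^{1,1}(B_r(\xi))\hookrightarrow L^2(B_r(\xi))$ -- whose constant is the same for all balls of radius $r$, by translation invariance -- to the auxiliary function $\sqrt{V}\,u_n^2$. Assumption \eqref{gradv} enters pointwise, to absorb the term $\frac{|\nabla V|}{2\sqrt{V}}u_n^2\le C\,V u_n^2$ produced by differentiating the weight. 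This yields
\begin{equation*}
\int_{B_r(\xi)}V(x)u_n^4\,dx\le C\,\|u_n\|_{L_V^2(B_r(\xi))}^{2}\,\|u_n\|_{H_V^1(B_r(\xi))}^{2}
\end{equation*}
with $C$ independent of $\xi$, after which the bounded-overlap covering gives $u_n\to 0$ in $L_V^4(\RD)$, exactly as in your summation step. The general case $\tau>2$ is then obtained by interpolation (\cite[Lemma 2.1]{ADP}): for $2<\tau<4$ one interpolates between $L_V^2$ (bounded) and $L_V^4$ (vanishing), and for $\tau>4$ between $L_V^4$ and a higher $L_V^q$ norm, whose boundedness is the only place where the \emph{global} embedding of Theorem \ref{th:emb} is invoked. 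If you replace your appeal to a localized Theorem \ref{th:emb} by this $W^{1,1}\hookrightarrow L^2$ trick applied to $\sqrt{V}\,u_n^2$, the rest of your covering argument goes through.
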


\begin{proof}
We start with the following intermediate step.

\vspace{.2cm}
{\sc Claim:}  $u_n \to 0$ strongly in $L_V^4(\RD)$,  as $n \to +\infty$.
\\
Since $W^{1,1}(B_r(\xi))\hookrightarrow L^2(B_r(\xi))$ and by \eqref{gradv}, we have
\begin{align*}
\int_{B_r(\xi)} V(x) u_n^4\, dx
&=\|\sqrt{V}u_n^2\|_{L^2(B_r(\xi))}^2
\le C\|\sqrt{V}u_n^2\|_{W^{1,1}(B_r(\xi))}^2
\\
&\le C\left(\int_{B_r(\xi)}  
\Big(\frac{|\n V(x)|}{2\sqrt{V(x)}} u_n^2 
+2\sqrt{V(x)} u_n |\n u_n|
+\sqrt{V(x)} u_n^2\Big) dx\right)^2
\\
&\le C\left(\int_{B_r(\xi)}  
\Big(V(x) u_n^2+\sqrt{V(x)} u_n |\n u_n|\Big) dx\right)^2
\\
&\le C\left(\|u_n\|_{L_V^2(B_r(\xi))}^2 
+\|u_n\|_{L_V^2(B_r(\xi))}\|\n u_n\|_{L^2(B_r(\xi))}\right)^2
\\
&\le C\|u_n\|_{L_V^2(B_r(\xi))}^2 
\left(\|u_n\|_{L_V^2(B_r(\xi))}+\|\n u_n\|_{L^2(B_r(\xi))}\right)^2
\\
&\le C\|u_n\|_{L_V^2(B_r(\xi))}^2 
\|u_n\|_{H_V^1(B_r(\xi))}^2
\\
&= o_n(1)
\|u_n\|_{H_V^1(B_r(\xi))}^2.
\end{align*}
Then, covering $\RD$ by ball of radius $r$ in such a way that any point of $\RD$ is contained in at most $m$ balls (where $m$ is a prescribed integer), being $\{u_n\}_n$ bounded in $\HV$, we deduce
\[
\|u_n\|_{V,4}^2\le m \, o_n(1)\|u_n\|_V^2=o_n(1).
\]

\vspace{.2cm}
{\sc Claim:}  $u_n \to 0$ strongly in $L_V^\tau(\RD)$,  for any $\tau>2$, as $n \to +\infty$.
\\
The conclusion follows by an interpolation argument by \cite[Lemma 2.1]{ADP}.
\end{proof}

\begin{lemma}\label{le:van}
	Vanishing does not hold.
\end{lemma}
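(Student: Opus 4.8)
The plan is to rule out vanishing by contradiction, exploiting the measure $\nu_n$ together with the compactness already established in Lemma~\ref{le:u_0}. Recall that $\nu_n(\O)$ is a convex combination, with \emph{positive} coefficients (because $\a \in (\tfrac{1}{p+3},\tfrac12)$ forces both $\tfrac12-\a>0$ and $\tfrac{\a(p+3)-1}{p+1}>0$), of the localized energy $\int_\O(|\n u_n|^2+V(x)u_n^2)\,dx$ and the localized nonlinear term $\|u_n\|_2^2\int_\O V(x)|u_n|^{p+1}\,dx$. Consequently the vanishing hypothesis, namely $\lim_n \sup_{\xi}\nu_n(B_r(\xi))=0$ for every $r>0$, forces in particular
\begin{equation*}
\lim_{n\to+\infty}\sup_{\xi\in\RD}\int_{B_r(\xi)}V(x)u_n^2\,dx=0,
\end{equation*}
since this term enters $\nu_n$ with a strictly positive coefficient and both summands are nonnegative. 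This is exactly the hypothesis \eqref{lions} of Lemma~\ref{le:lions}.

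The second step is to invoke Lemma~\ref{le:lions}, which is available precisely because $V$ satisfies \eqref{gradv} by the standing assumptions of this section. Applying it to the sequence $\{u_n\}_n$ (bounded in $\HV$ by Lemma~\ref{le:u_0}) yields $u_n\to 0$ strongly in $L^\tau_V(\RD)$ for every $\tau>2$; in particular, taking $\tau=p+1$,
\begin{equation*}
\|u_n\|_{V,p+1}\to 0 \qquad\text{as }n\to+\infty.
\end{equation*}
At the same time, $\{u_n\}_n$ lies on the Nehari manifold $\Ne$ and satisfies the boundedness of $\|u_n\|_2$ (indeed $u_n\to u_0$ in $L^2(\RD)$ by \eqref{eq:str}). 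The third step is therefore to confront this with Lemma~\ref{le:p+1}, which asserts the existence of $c>0$ with $\|u_n\|_2\cdot\|u_n\|_{V,p+1}\ge c$ for all $n$. Since $\|u_n\|_2$ stays bounded while $\|u_n\|_{V,p+1}\to 0$, the product tends to zero, contradicting the uniform lower bound $c>0$.

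This contradiction shows that the vanishing alternative cannot occur, which is the assertion of the lemma. I do not expect any genuine obstacle here: the argument is structurally a short three-step chain (positivity of the coefficients in $\nu_n$ extracts the $L^2_V$-concentration condition, Lemma~\ref{le:lions} upgrades it to strong $L^{p+1}_V$-decay, and Lemma~\ref{le:p+1} forbids that decay on $\Ne$). The only point requiring minor care is the first step, where one must note that the nonlinear coefficient $\tfrac{\a(p+3)-1}{p+1}$ is strictly positive so that no cancellation can hide the vanishing of the quadratic part; this is guaranteed by the lower endpoint $\a>\tfrac{1}{p+3}$ in the choice of $\alpha$. The heavier analytic content—the $W^{1,1}\hookrightarrow L^2$ estimate and the interpolation—has already been absorbed into Lemma~\ref{le:lions}, so nothing further is needed.
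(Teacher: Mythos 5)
Your proof is correct and follows essentially the same route as the paper: vanishing, together with the strict positivity of both coefficients in $\nu_n$, yields hypothesis \eqref{lions}, and Lemma \ref{le:lions} then gives $u_n \to 0$ strongly in $L^{p+1}_V(\RD)$. The only (immaterial) difference is the final contradiction: the paper uses $J(u_n)=0$ to conclude $u_n \to 0$ in $\HV$, contradicting $I(u_n)\to\s>0$, whereas you invoke Lemma \ref{le:p+1} together with the boundedness of $\|u_n\|_2$; both are valid one-line consequences of the Nehari constraint.
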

\begin{proof}
	If vanishing holds, we  have that
	\[
	\lim_{n \to +\infty} \sup_{\xi\in\RD} \int_{B_r(\xi)} V(x) u_n^2\, dx=0
	\]
and so, by Lemma \ref{le:lions}, 
we deduce that ${u_n}\to 0 $ strongly in $L^{p+1}_V(\RD)$ and then, being $J(u_n)=0$, we would have  $u_n\to 0$ in $\HV$, contradicting $I(u_n)\to \s>0$.
\end{proof}

\begin{lemma}\label{le:dicho}
	Dichotomy does not hold.
\end{lemma}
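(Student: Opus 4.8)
The plan is to rule out dichotomy by splitting $u_n$ into an inner and an outer piece, to show that the outer piece becomes \emph{energetically inert} because it loses all its $L^2$-mass in the limit, and to convert this into a strict energy inequality that contradicts the minimality of $\sigma$.

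First I would record two facts about the minimizing sequence $\{u_n\}_n$. By Lemma~\ref{le:u_0} we have $u_n \weakto u_0$ in $\HV$ with $u_0\neq 0$, and, since $V$ is coercive, $\HV$ is compactly embedded in $L^2(\RD)$, so $u_n\to u_0$ in $L^2(\RD)$ and in particular $\|u_n\|_2^2\to\|u_0\|_2^2=:m_0>0$. The convergence of the \emph{nonlocal factor} $\|u_n\|_2^2$ is what makes the argument tractable. Assume, for contradiction, that dichotomy holds, with measures $\nu_n^1,\nu_n^2$, radii $R_n\to+\infty$ and $\tilde\sigma\in(0,\sigma)$. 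From $\nu_n^1+\nu_n^2\le\nu_n$, $\nu_n^1(\RD)\to\tilde\sigma$, $\nu_n^2(\RD)\to\sigma-\tilde\sigma$ and $\nu_n(\RD)\to\sigma$ one deduces that the mass of $\nu_n$ on the annulus $B_{2R_n}\setminus B_{R_n}$ tends to $0$; as both coefficients in the definition of $\nu_n$ are positive, this forces $\int_{B_{2R_n}\setminus B_{R_n}}(|\nabla u_n|^2+V u_n^2)\,dx\to 0$ and, using that $\|u_n\|_2^2$ is bounded below, also $\int_{B_{2R_n}\setminus B_{R_n}}V|u_n|^{p+1}\,dx\to 0$.

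Next I would fix a cutoff $\eta_n$ equal to $1$ on $B_{R_n}$, vanishing outside $B_{2R_n}$, with $|\nabla\eta_n|\le C/R_n$, and set $v_n:=\eta_n u_n$, $w_n:=(1-\eta_n)u_n$, so that $u_n=v_n+w_n$. The vanishing of the annular densities yields the asymptotic additivities $\|u_n\|_V^2=\|v_n\|_V^2+\|w_n\|_V^2+o(1)$, $\ird V|u_n|^{p+1}=\ird V|v_n|^{p+1}+\ird V|w_n|^{p+1}+o(1)$ and $\|u_n\|_2^2=\|v_n\|_2^2+\|w_n\|_2^2+o(1)$. Since $R_n\to+\infty$ and $u_n\to u_0$ in $L^2$, the outer piece satisfies $\|w_n\|_2\to 0$, whence $\|v_n\|_2^2\to m_0$; moreover the nontriviality of both $\nu_n^1$ and $\nu_n^2$, combined with the continuous embedding $\HV\hookrightarrow L^{p+1}_V(\RD)$, forces $\|v_n\|_V$ and $\|w_n\|_V$ to remain bounded away from $0$. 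Expanding the product $\|u_n\|_2^2\,\ird V|u_n|^{p+1}$ and using additivity I would arrive at
\begin{equation*}
I(u_n)=I(v_n)+I(w_n)-\frac{1}{p+1}\Big(\|v_n\|_2^2\ird V|w_n|^{p+1}+\|w_n\|_2^2\ird V|v_n|^{p+1}\Big)+o(1).
\end{equation*}
Because $\|w_n\|_2\to 0$, the outer piece decouples from the nonlinearity: its energy reduces to $I(w_n)=\tfrac12\|w_n\|_V^2+o(1)$, bounded below by a positive constant, while its Nehari projection runs off to infinity (so that $\max_{t>0}I(tw_n)\to+\infty$, making $w_n$ a useless competitor). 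I would then project the inner piece $v_n$ onto $\Ne$ and compare its level with $\sigma$ through the explicit mountain–pass value $\Phi(u):=\max_{t>0}I(tu)$, which on $\Ne$ coincides with $I$ by~\eqref{I|N}. Exploiting $\|v_n\|_2^2\to m_0$, $\|w_n\|_2\to 0$ and the additivity relations, the strictly positive gradient energy $\tfrac12\|w_n\|_V^2$ carried by the inert outer piece has to be ``paid for'' without any nonlinear gain, and this should force the projected level $\Phi(v_n)$ to fall strictly below $\sigma=\lim_n I(u_n)$, contradicting $\Phi(v_n)\ge\sigma$.

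The main obstacle is exactly this nonlocal product $\|u\|_2^2\,\ird V|u|^{p+1}$: unlike a local nonlinearity it is not additive under the decomposition $u_n=v_n+w_n$, and the two cross terms $\|v_n\|_2^2\int V|w_n|^{p+1}$ and $\|w_n\|_2^2\int V|v_n|^{p+1}$ must be estimated rather than dropped. The delicate point is to prove that these cross terms — one controlled by the vanishing factor $\|w_n\|_2^2$, the other by the converging factor $\|u_n\|_2^2\to m_0$ — cannot conspire to keep $I(u_n)$ pinned at the infimum $\sigma$ while the mass genuinely splits; equivalently, that the homogeneity of $\Phi$ together with $\|w_n\|_2\to 0$ yields a \emph{strict} gap $\Phi(v_n)<\sigma$.
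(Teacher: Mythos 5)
Your first half coincides with the paper's own first step: the same cutoff decomposition $u_n=v_n+w_n$ at radii $R_n$, $2R_n$, the vanishing of all energy densities on the annulus, and the asymptotic additivity of $\|\cdot\|_V^2$, of $\ird V|\cdot|^{p+1}\,dx$ and of $\|\cdot\|_2^2$; your additional observation that $\|w_n\|_2\to0$ (compactness of $\HV\hookrightarrow L^2(\RD)$ plus $R_n\to+\infty$) is correct, and is not even needed in the paper. The gap is the endgame, which is the only place a contradiction can come from. Everything hinges on the strict inequality $\Phi(v_n)<\sigma$, where $\Phi(u):=\max_{t>0}I(tu)$, and this is never proved: it is supported only by the heuristic that the tail $w_n$ must be ``paid for without any nonlinear gain''. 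That heuristic is false for this functional. Inside $I(u_n)$ the nonlinear mass of the tail, $\ird V|w_n|^{p+1}\,dx$, is weighted by the \emph{global} factor $\|u_n\|_2^2\to\|u_0\|_2^2=:m_0>0$, not by the vanishing factor $\|w_n\|_2^2$: of your two cross terms, $\|w_n\|_2^2\ird V|v_n|^{p+1}\,dx$ does vanish, but $\|v_n\|_2^2\ird V|w_n|^{p+1}\,dx=m_0\ird V|w_n|^{p+1}\,dx+o_n(1)$ survives, and $\|w_n\|_2\to0$ does \emph{not} force $\ird V|w_n|^{p+1}\,dx\to0$ when $V$ is coercive (for coercive $V$, the travelling bumps in the proof of Proposition \ref{pr:general} have $L^2$ norms tending to zero while $\ird V|u_n|^{\tau}\,dx$ stays bounded away from zero). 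Hence the tail is not energetically inert in $I(u_n)$: it produces the gain $\frac{m_0}{p+1}\ird V|w_n|^{p+1}\,dx$, which can offset its cost $\frac12\|w_n\|_V^2$ and keep $I(u_n)$ pinned at $\sigma$ while $\Phi(v_n)\ge\sigma$.

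The obstruction can be made quantitative. Put $A_n^v:=\|v_n\|_V^2$, $A_n^w:=\|w_n\|_V^2$, $B_n^v:=\ird V|v_n|^{p+1}\,dx$, $B_n^w:=\ird V|w_n|^{p+1}\,dx$ (note that $B_n^v$ stays bounded away from zero, by $v_n\weakto u_0\neq0$ and Fatou's lemma). Using $J(u_n)=0$, \eqref{I|N}, your additivity relations and $\|v_n\|_2^2\to m_0$, the desired inequality $\Phi(v_n)<\sigma$ is equivalent, up to $o_n(1)$, to
\begin{equation*}
\left(1+\frac{B_n^w}{B_n^v}\right)^{\frac{2}{p+1}}<\left(1+\frac{A_n^w}{A_n^v}\right)^{\frac{p+3}{p+1}},
\end{equation*}
that is, to an \emph{upper} bound on the tail's nonlinear mass $B_n^w$ in terms of its norm $A_n^w$. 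None of the facts you assemble (annulus vanishing, $\|w_n\|_2\to0$, $\Phi(w_n)\to+\infty$, norms bounded away from zero) yields such a bound: one can prescribe limits for $(A_n^v,A_n^w,B_n^v,B_n^w,\|v_n\|_2^2,\|w_n\|_2^2)$ satisfying every relation in your proposal, the dichotomy bookkeeping, and $\Phi(v_n)\ge\sigma$ simultaneously (roughly, any configuration with $m_0B_n^w\ge\frac{p+1}{2}A_n^w$), so no contradiction can follow from these facts alone. This is exactly where the paper takes a different, and essential, turn: it works with the auxiliary functional $G$ --- equal to $I$ on $\Ne$, but with both coefficients positive, hence increasing along $t\mapsto tu$ for $t\in(0,1]$ and superadditive under the splitting --- proves $\liminf_n G(v_n)=\tilde\sigma<\sigma$, and argues by cases on the signs of $J(v_n)$ and $J(w_n)$. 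Your claimed strict gap is precisely what the paper obtains in its Case 1: if $J(v_n)\le0$, the projection parameter satisfies $t_n\le1$, whence $\sigma\le I(t_nv_n)=G(t_nv_n)\le G(v_n)\to\tilde\sigma<\sigma$. But when $J(v_n)>0$ one has $t_n>1$ and $\Phi(v_n)=G(t_nv_n)\ge G(v_n)$, so the comparison with $\tilde\sigma$ gives nothing; this is exactly the regime where the surviving cross term can sustain a dichotomy, and your proposal has no tool to exclude it.
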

\begin{proof}
	As usual, we perform a proof by contradiction assuming that, on the contrary, dichotomy holds.\\
	Consider a radial function $\rho_n\in C^1_0(\RD,[0,1])$  such that, for any $n\ge 1$,  $\rho_n\equiv 1$ in $B_{R_n}$, $\rho_n\equiv 0$ in $ B_{2R_n}^c$ and $\sup_{x\in\RD}|\n\rho_n(x)|\le 2/R_n$. Moreover set $v_n=\rho_nu_n$ and $w_n=(1-\rho_n)u_n$. It is clear  that $v_n, w_n\in H^1_V$.
	Now we proceed by steps.
	
\vspace{.2cm}
	{\sc First step}: we claim that
	\begin{align}\label{eq:onu}
	&\left(\frac 12 -\a\right)\int_{\O_n}(|\n u_n|^2+V(x)u_n^2)\, dx+\frac{\alpha(p+3)-1}{p+1}\|u_n\|_2^2\int_{\O_n} V(x)|u_n|^{p+1}\, dx \to 0, \\\label{eq:onv}
	&\left(\frac 12 -\a\right)\int_{\O_n}(|\n v_n|^2+V(x)v_n^2)\, dx+\frac{\alpha(p+3)-1}{p+1}\|v_n\|_2^2\int_{\O_n} V(x)|v_n|^{p+1}\, dx \to 0, \\\label{eq:onw}
	&\left(\frac 12 -\a\right)\int_{\O_n}(|\n w_n|^2+V(x)w_n^2)\, dx+\frac{\alpha(p+3)-1}{p+1}\|w_n\|_2^2\int_{\O_n} V(x)|w_n|^{p+1}\, dx \to 0, 
	\end{align}
	as $n \to +\infty$, where $\O_n:=\{x\in\RD: R_n\le |x|\le 2R_n\}$.

	Let's prove \eqref{eq:onu}. Observe that 
	\begin{align*}
	\nu_n(\O_n)&=\s-\nu_n(B_{R_n})-\nu_n(B^c_{2R_n})+o_n(1)\\
	&\le \s- \nu^1_n(B_{R_n})-\nu^2_n(B^c_{2R_n})+o_n(1)=o_n(1)
	\end{align*}
	and then we deduce \eqref{eq:onu}.
	Moreover, by simple computations
	\begin{align*}
	&\left(\frac 12 -\a\right)\int_{\O_n} \left(|\n v_n|^2+V(x)v_n^2 \right)\, dx+\frac{\alpha(p+3)-1}{p+1}\|v_n\|_2^2\int_{\O_n} V(x)|v_n|^{p+1}\, dx\\
	&\qquad\le 2\left(\frac 12 -\a\right) \int_{\O_n}\left(|\n u_n|^2+\left(V(x)+\frac4{R^2_n}\right) u_n^2\right)\, dx \\
	&\qquad\qquad {}+\frac{\alpha(p+3)-1}{p+1}\|u_n\|_2^2\int_{\O_n} V(x)|u_n|^{p+1}\, dx \\
	&\qquad\le o_n(1)
	\end{align*}
	and then we have proved also \eqref{eq:onv}. The proof of \eqref{eq:onw} is analogous.
	
\vspace{.2cm}
	{\sc Second step}: $\liminf_{n \to +\infty}  G(v_n)=\tilde{\s}$.\\
	Observe that
	\begin{equation}\label{superfico}
	G(v_n)\ge \nu_n (B_{R_n})\ge \nu_n^1 (B_{R_n})\to \tilde{\s},
	\end{equation} 
	Now, observe that, by the first step and considering that $\nu_n\ge\nu_n^2$,
	\begin{align*}
	\s&=\lim_{n \to +\infty}\nu_n(\RD)=\lim_{n \to +\infty} (\nu_n(B_{R_n})+\nu_n(B_{2R_n}^c))\\
	&\ge \liminf_{n \to +\infty}  G(v_n)+\lim_{n \to +\infty}\nu_n^2(B_{2R_n}^c).
	\end{align*}
	Since $\lim_{n \to +\infty}\nu_n^2(\RD)=\s-\tilde \s$ and $\supp \nu_n^2\subset B_{2R_n}^c$, we conclude that 
	$$\liminf_{n \to +\infty}  G(v_n)=\tilde \s.$$
	
\vspace{.2cm}
	{\sc Third step}: conclusion.\\
	First of all observe that, since $u_n=v_n+w_n$, then by the first step
	\begin{equation}\label{eq:Gun}
	G(u_n)\ge G(v_n)+G(w_n)+o_n(1). 
	\end{equation}
	Observe that, by first step, 
	\begin{equation}\label{eq:Jun}
	0=J(u_n)\ge J(v_n)+J(w_n)+o_n(1).
	\end{equation}
	For any $n\in \N$, let $t_n, s_n>0$ be the numbers, respectively, such that $t_nv_n\in \Ne$ and  $s_nw_n\in \Ne$.
	
	There are now three possibilities.
	
	\textit{Case 1}:  up to a subsequence, $J(v_n)\le 0$. 
	\\
	By simple computations we see that $t_n\le 1$ and then we have
	\begin{equation*}
	\s\le I(t_nv_n)=G(t_nv_n)\le G(v_n),
	\end{equation*}
	which, for a large $n\ge 1$, leads to a contradiction due to the fact that, by the second step,
	$$\s>\tilde \s= \liminf_{n \to +\infty} G(v_n).$$
	
	{\it Case 2}:  up to a subsequence, $J(w_n)\le 0.$\\
	Then, proceeding as in the first case, by \eqref{superfico} and using \eqref{eq:Gun}, we have, for $n$ sufficiently large,
	\begin{equation*}
	\s\le I(s_nw_n)=G(s_nw_n)\le G(w_n)\le G(u_n)+o_n(1)=\s+o_n(1),
	\end{equation*}
	that is $\s=\lim_{n \to +\infty} G(w_n)$. Then, passing to the limit in \eqref{eq:Gun}, we have
	$$\s\ge \s + \liminf_{n \to +\infty} G(v_n)$$ 
	which contradicts the result obtained in the second step.
	
	{\it Case 3}:  there exists $n_0\ge 1$ such that for all $n\ge n_0$ both $J(v_n)>0$ and $J(w_n)>0$. 
	\\
	Then $\liminf_{n \to +\infty} t_n\ge1$ and, by \eqref{eq:Jun}, we also have that $J(v_n)=o_n(1)$. \\
	If $ \liminf_{n \to +\infty}  t_n = 1$, we can repeat the computations performed in the first case and get the contradiction. If $\liminf_{n \to +\infty}  t_n >1$, from
	\begin{equation*}
	o_n(1) = J(v_n)-\frac 1{t_n^{p+3}}J(t_nv_n)=\left(1-\frac 1{t_n^{p+1}}\right)\| v_n\|^2_V,
	\end{equation*}
	we get a contradiction since $\liminf_{n \to +\infty}  G(v_n)>0$ by the second step.
\end{proof}

\begin{proposition}\label{conc}
	Concentration holds and, moreover, the sequence $\{\xi_n\}_n$ is bounded.
\end{proposition}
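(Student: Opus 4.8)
The plan is to obtain concentration by elimination and then to localize the concentration points by playing the coercivity of $V$ against the nonzero weak limit $u_0$. Since vanishing is excluded by Lemma~\ref{le:van} and dichotomy by Lemma~\ref{le:dicho}, the Lions trichotomy recalled above leaves only the first alternative, so \emph{concentration} holds: there is a sequence $\{\xi_n\}_n\subset\RD$ such that for every $\eps>0$ one can find $r=r(\eps)>0$ with $\nu_n(B_r(\xi_n))\ge\s-\eps$ for all $n$. The whole issue is therefore to show that $\{\xi_n\}_n$ is bounded.

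For the boundedness, the key observation I would establish first is that a \emph{fixed} ball centred at the origin already carries a definite amount of $\nu_n$-mass. By Lemma~\ref{le:u_0} we have $u_n\weakto u_0$ with $u_0\neq0$, and by \eqref{eq:str} the convergence $u_n\to u_0$ is strong in $L^2(\RD)$. I would fix $R>0$ so large that $\int_{B_R(0)}u_0^2\,dx\ge\frac12\|u_0\|_2^2>0$, so that $\int_{B_R(0)}u_n^2\,dx\to\int_{B_R(0)}u_0^2\,dx$. Since $\a\in\big(\frac1{p+3},\frac12\big)$ makes both coefficients in the definition of $\nu_n$ positive, I may discard the gradient and the nonlinear contributions and use $V_0:=\inf_{\RD}V>0$ to get
\begin{equation*}
\nu_n(B_R(0))\ge\Big(\frac12-\a\Big)\int_{B_R(0)}V(x)u_n^2\,dx\ge\Big(\frac12-\a\Big)V_0\int_{B_R(0)}u_n^2\,dx.
\end{equation*}
Passing to the limit yields $\liminf_{n\to+\infty}\nu_n(B_R(0))\ge\beta$, where $\beta:=\frac12\big(\frac12-\a\big)V_0\|u_0\|_2^2>0$.

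I would then conclude by contradiction. Assume that, along a subsequence, $|\xi_n|\to+\infty$, fix $\eps\in\big(0,\frac\beta2\big)$ and let $r=r(\eps)$ be the radius provided by concentration. For $n$ large enough $|\xi_n|>R+r$, so $B_r(\xi_n)$ and $B_R(0)$ are disjoint; since $\nu_n$ is a positive measure and $\nu_n(B_R(0))\ge\frac\beta2$ for large $n$, additivity gives
\begin{equation*}
\s+o_n(1)=\nu_n(\RD)\ge\nu_n(B_r(\xi_n))+\nu_n(B_R(0))\ge(\s-\eps)+\frac\beta2.
\end{equation*}
Letting $n\to+\infty$ we obtain $0\ge\frac\beta2-\eps>0$, a contradiction. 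Hence $\{\xi_n\}_n$ is bounded.

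The step I expect to be the most delicate is the lower bound on $\nu_n(B_R(0))$: the measure $\nu_n$ mixes the gradient energy, the weighted $L^2$ energy and the weighted $L^{p+1}$ term, and none of these is controlled by the bare $L^2$ mass that the strong convergence $u_n\to u_0$ supplies. The way around this is that only a one-sided estimate is needed, so one simply throws away the two positive contributions and converts the surviving weighted $L^2$ term into genuine $L^2$ mass through $\inf V>0$. A secondary point to keep in mind is that, in the Lions statement, the sequence $\{\xi_n\}_n$ is chosen once and for all, independently of $\eps$; this is precisely what lets the single threshold $\eps<\beta/2$ be pitted against the fixed ball $B_R(0)$ to force $|\xi_n|$ to stay bounded.
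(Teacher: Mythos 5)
Your argument is correct, and its skeleton is exactly the paper's: concentration follows by elimination from Lemmas \ref{le:van} and \ref{le:dicho}, and boundedness of $\{\xi_n\}_n$ follows by contradiction, playing a fixed ball that carries a definite amount of $\nu_n$-mass against the disjoint travelling ball $B_r(\xi_n)$, whose masses cannot both fit under the total $\nu_n(\mathbb{R}^2)=\sigma+o_n(1)$. Where you genuinely differ is in how the fixed-ball lower bound is produced. The paper keeps the \emph{nonlinear} part of $\nu_n$: it fixes $R$ so that $m:=\frac{\alpha(p+3)-1}{p+1}\|u_0\|_2^2\int_{B_R}V(x)|u_0|^{p+1}\,dx>0$ and passes to the limit in $\|u_n\|_2^2\int_{B_R}V(x)|u_n|^{p+1}\,dx$ by invoking the compactness of the local embedding $H^1_V(B_R)\hookrightarrow L^\tau_V(B_R)$ (together with the strong $L^2$ convergence for the factor $\|u_n\|_2^2$). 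You instead keep only the weighted $L^2$ part, discard the gradient and nonlinear contributions (legitimate, since $\alpha\in\bigl(\tfrac{1}{p+3},\tfrac12\bigr)$ makes both coefficients positive), and convert it to plain $L^2$ mass via $\inf_{\mathbb{R}^2}V>0$, so that the lower bound $\liminf_n\nu_n(B_R(0))\ge\beta>0$ follows from the strong convergence \eqref{eq:str} alone, which Lemma \ref{le:u_0} already supplies. Your variant is therefore more economical in its ingredients: it needs no local compact embedding into a weighted Lebesgue space, only facts already established, while the paper's version stresses that the nonlinear term itself does not vanish near $u_0$; for the purpose of this proposition the two estimates are interchangeable, and your closing remark — that in Lions' alternative $\{\xi_n\}_n$ is chosen once, independently of $\varepsilon$, which is what allows a single threshold $\varepsilon<\beta/2$ to be used — correctly identifies the point on which the contradiction hinges.
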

\begin{proof}
By Lemmas \ref{le:van} and \ref{le:dicho}, we deduce that concentration occurs.

Now assume, by contradiction, that $\{\xi_n\}_n$ is unbounded.
Since, by Lemma \ref{le:u_0}, $u_0\neq 0$, there exists $ R>0$ such that
\[
m:=\frac{\alpha(p+3)-1}{p+1}\|u_0\|_2^2\int_{B_{R}} V(x)|u_0|^{p+1}\, dx>0.
\]
Since the embedding of $H^1_V(B_{ R})$ in $L^\tau_V(B_{ R})$ is compact, for all $\tau\ge 2$, we have 
		\begin{equation*}
			\liminf_{n \to +\infty}  \nu_n(B_{R})\ge \frac{\alpha(p+3)-1}{p+1}\|u_0\|_2^2\int_{B_{R}} V(x)|u_0|^{p+1}\, dx=m>0.
		\end{equation*}
	Consider $\eps=m/2$ and apply the concentration hypothesis. We would have for some $\tilde R>0$  that  $\nu_n(B_{\tilde R}(\xi_n))\ge \s-\varepsilon$ for all $n\ge 1$.
	\\
	Then, since for large $n\ge1$ we would have $B_{R}\cap B_{\tilde R}(\xi_n)=\emptyset$, the following should hold
		\begin{displaymath}
		\s=\lim_{n \to +\infty}\nu_n(\RD)\ge \liminf_{n \to +\infty} \nu_n(B_{R}) + \liminf_{n \to +\infty}  \nu_n(B_{\tilde R}(\xi_n)) \ge \s+\frac m2,
		\end{displaymath}
reaching a contradiction.
\end{proof}

\begin{proof}[Proof of Theorem \ref{th:main}]
By Proposition \ref{conc}, $u_n$ tends to $u_0$ in $H^1_V(\RD)$. As a consequence $u_0$ solves the minimizing problem 
\begin{displaymath}
	I(u_0)=\inf_{u\in \Ne}I(u).
\end{displaymath}
Of course, since also $|u_0|$ solves the same minimizing problem, we may assume $u_0$ nonnegative.
By Lemma \ref{le:ne} and standard arguments, it solves the problem
\begin{displaymath}
	-\Delta u_0 + \left(V(x) - \frac 2{p+1}\ird V(x)u_0^{p+1}\, dx\right) u_0=\|u_0\|_2^2 V(x)u_0^{p}.
\end{displaymath}
We set
\begin{displaymath}
	\lambda = \frac 2{p+1}\ird V(x)u_0^{p+1}\, dx, \quad \mu = \|u_0\|_2^2.
\end{displaymath}
With some simple computations, the function $\bar u=\mu^{\frac1{p-1}}u_0$ solves
\begin{displaymath}
	-\Delta \bar u + \left(V(x) - \lambda \right) \bar u= V(x)\bar u^{p}
\end{displaymath}
and the relation between $\l$ and $\bar u$ is obtained by direct computations.

Finally, since $H_V^1(\R^2)\hookrightarrow H^1(\R^2)$ and locally $V$ is strictly positive and bounded, the regularity of $\bar u$ follows as usual.

\end{proof}

\begin{remark}\label{re:th}
Some comments are in order about Theorem \ref{th:main} and its proof. Our result, with the presence of the parameter $\l$, is a direct consequence of the choice of  functional $I$ defined in \eqref{I} and, in particular, of the presence of the multiplicative factor $L^2$-norm. This choice is, for sure, quite tricky, but it is strictly related to lack of the compact embedding of $\HV$ into $L_V^{p+1}(\RD)$. 
Removing the multiplicative factor $L^2$-norm, we can still avoid the vanishing and the dichotomy for a minimizing sequence. Unfortunately, the sequence $\lbrace \xi_n\rbrace_n$ might well diverge to infinity: the minimizing sequence could behave as the sequence described in the proof of Proposition \ref{pr:general}.
\end{remark}

The difficulties described in Remark \ref{re:th} basically disappear when the compact embedding holds. \\
In view of this, we list the following three possibilities
	\begin{enumerate}[label=($H_\arabic{*}$),ref=$H_\arabic{*}$]
\setcounter{enumi}{0}
		\item\label{H1} $N=1, p>2$, $V$ is coercive, $W(x)\le CV(x)$   for some $C>0$ and any $x\in\R$ and \eqref{gradv1} holds for $V$;
		\item\label{H2} $N\ge 1$, $p$ satisfying\eqref{tau} with  $\alpha\in (0,1)$, $V$ and $W$ such that  $V$ is coercive and and \eqref{VWa} holds;
		\item\label{H3} $N\ge 2$, $V$ and $W$ are radial functions   such that \eqref{VWx} holds for $V$, $W$ and $p\in (2,2^*)$ ($2^*=\infty$ if $N=2$).
	\end{enumerate}
Taking into account Theorems \ref{thvwa}, \ref{th:main1} and \ref{thrad} and Remark \ref{re:VW1}, we have the following 
\begin{corollary} \label{th:main2}
Assume that $V\colon \RN\to\R$ and  $W\colon \RN\to\R$ are potentials in $\mathscr{C}^*(\RN)$ and one among \eqref{H1}, \eqref{H2} and \eqref{H3} holds.
Then there exists a nontrivial solution $u \in H^1_{V}(\RN)$ (actually $u \in H^1_{V,{\rm rad}}(\RN)$ if \eqref{H3} holds) of
\[
-\Delta u +V(x)  u=W(x)|u|^{p-1}u \qquad\text{ in }\RN.
\]
\end{corollary}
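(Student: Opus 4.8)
The point of this corollary, as the discussion preceding it stresses, is that in each of the three regimes \eqref{H1}, \eqref{H2}, \eqref{H3} the lack of compactness that made Theorem \ref{th:main} delicate simply evaporates. Concretely, the range restrictions imposed on $V$, $W$ and $p$ are exactly those under which the embedding of $H^1_V(\RN)$ (respectively of its radial subspace $H^1_{V,{\rm rad}}(\RN)$ in case \eqref{H3}) into the weighted space $L^{p+1}_W(\RN)$ attached to the nonlinearity is \emph{compact}: this is Theorem \ref{thvwa} under \eqref{H2}, Theorem \ref{th:main1} together with Remark \ref{re:VW1} under \eqref{H1}, and Theorem \ref{thrad} under \eqref{H3}. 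Consequently no spectral parameter $\l$ and no multiplicative $L^2$-factor as in \eqref{I} are needed, and the equation can be solved by a direct variational method.

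The plan is to work with the natural energy functional
\[
\Phi(u)=\frac12\int_{\RN}\big(|\n u|^2+V(x)u^2\big)\,dx-\frac1{p+1}\int_{\RN}W(x)|u|^{p+1}\,dx
\]
on $X:=H^1_V(\RN)$, or on $X:=H^1_{V,{\rm rad}}(\RN)$ in case \eqref{H3}. The continuous embedding underlying the compact one makes $\Phi$ of class $C^1$ on $X$, and its critical points are weak solutions of the equation. Since $p+1>2$ in all three cases, $\Phi$ exhibits the mountain pass geometry: by the continuous embedding $\Phi(u)\ge\frac12\|u\|_V^2-C\|u\|_V^{p+1}$, so $\Phi$ is bounded below by a positive constant on a small sphere about $0$, while $\Phi(tu_0)\to-\infty$ as $t\to+\infty$ for any fixed $u_0\neq0$.

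The decisive step -- and the only place where anything beyond routine bookkeeping enters -- is the Palais--Smale condition, which the compact embedding supplies. For a $(PS)_c$ sequence $\{u_n\}_n$, boundedness in $X$ follows from the identity
\[
\Phi(u_n)-\frac1{p+1}\Phi'(u_n)[u_n]=\Big(\frac12-\frac1{p+1}\Big)\|u_n\|_V^2.
\]
Extracting a subsequence with $u_n\weakto u$, the compact embedding into $L^{p+1}_W(\RN)$ yields strong convergence there, hence $\int_{\RN}W|u_n|^{p+1}\to\int_{\RN}W|u|^{p+1}$ and $\Phi'(u)=0$; then $\|u_n\|_V^2=\Phi'(u_n)[u_n]+\int_{\RN}W|u_n|^{p+1}\to\|u\|_V^2$, which together with weak convergence in the Hilbert space $X$ forces $u_n\to u$ strongly. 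Thus $\Phi$ satisfies $(PS)_c$, and the mountain pass theorem produces a nontrivial critical point at the positive level $c$; alternatively one minimizes $\Phi$ over the associated Nehari manifold, just as was done for $\Ne$ in Section \ref{se:exi}. Since $\Phi(|u|)=\Phi(u)$, the solution may be taken nonnegative.

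Finally, in case \eqref{H3} one must pass from the radial subspace back to the full space: as $V$ and $W$ are radial, $\Phi$ is invariant under the orthogonal group $O(N)$ and $H^1_{V,{\rm rad}}(\RN)$ is its fixed-point set, so by the principle of symmetric criticality a critical point of $\Phi|_X$ is a critical point of $\Phi$ on all of $H^1_V(\RN)$, and hence a radial weak solution; in cases \eqref{H1} and \eqref{H2} the critical point already lives in $H^1_V(\RN)$ and no such step is required. I do not anticipate any genuine obstacle here: the hard analytic content, namely compactness, has been entirely absorbed into the embedding theorems of Section \ref{se:func}, so that what remains is the textbook variational argument for coercive-potential Schr\"odinger equations, the only points deserving attention being the verification of $(PS)_c$ through the compact embedding and, in the radial case, the appeal to symmetric criticality.
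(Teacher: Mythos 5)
Your proposal is correct and takes essentially the same route as the paper: the paper gives no separate proof of this corollary, deducing it directly from the compact embeddings of Theorems \ref{thvwa}, \ref{th:main1}, \ref{thrad} together with Remark \ref{re:VW1}, on the understanding that once compactness holds the standard variational argument applies --- which is exactly what you spell out (mountain pass / Nehari minimization with the Palais--Smale condition supplied by the compact embedding, plus symmetric criticality in case \eqref{H3}). Your write-up simply makes explicit the textbook details the paper leaves implicit.
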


\bibliographystyle{amsplain}
\nocite{*}
\bibliography{aps}

\end{document}